\documentclass[12pt]{amsart}
\usepackage{amsmath,amssymb,latexsym, amsthm, amscd, mathrsfs, stmaryrd}
\usepackage[all]{xy}

\setlength{\hoffset}{0pt}
\setlength{\voffset}{0pt}
\setlength{\topmargin}{0pt}
\setlength{\oddsidemargin}{0in}
\setlength{\evensidemargin}{0in}
\setlength{\textheight}{8.75in}
\setlength{\textwidth}{6.5in}
\pagestyle{headings}

\theoremstyle{definition}
\theoremstyle{plain}
\newtheorem{prop}[subsubsection]{Proposition}
\newtheorem{thm}[subsubsection]{Theorem}
\newtheorem{lem}[subsubsection]{Lemma}
\newtheorem{cor}[subsubsection]{Corollary}

\newcommand{\Hom}{\mathrm{Hom}}
\newcommand{\mbf}{\mathbf}
\newcommand{\mbb}{\mathbb}
\newcommand{\mrm}{\mathrm}

\newcommand{\D}{\mathcal D}
\newcommand{\E}{\mbf E}
\newcommand{\F}{\mathcal F}
\newcommand{\G}{\mbf G_{\nu}}
\newcommand{\K}{\mbb K}

\newcommand{\Z}{\mbf Z}
\newcommand{\Fv}{\mathcal F_{\underset{\to}{\nu}} }
\newcommand{\Nv}{\mathcal N_{\underset{\to}{\nu}} }
\newcommand{\tF}{\widetilde{\mathcal F}}
\newcommand{\tFv}{\widetilde{\mathcal F}_{\underset{\to}{\nu}} }
\newcommand{\tGv}{\widetilde{\mathcal G}_{\underset{\to}{\nu}} }
\newcommand{\tNv}{\widetilde{\mathcal N}_{\underset{\to}{\nu}} }

\newcommand{\IC}{\mrm{IC}}

\newcommand{\End}{\mbox{End}}

\title[Perverse sheaves, representation varieties, framed Jordan quiver]{A class of perverse sheaves on framed  representation  varieties  of the  Jordan quiver }

\author{  Yiqiang Li}
\address{Department of Mathematics\\   State University of New York at Buffalo\\
244 Mathematics Building, Buffalo, NY 14260}
\email{yiqiang@buffalo.edu}

\date{\today}
\keywords{} 
\subjclass{}

\begin{document}
\begin{abstract}
A class of perverse sheaves  on  framed representation varieties of the Jordan quiver is defined and studied. Its relationship with product of symmetric groups, tensor product of Schur algebras, and tensor product of Fock spaces are addressed. 
\end{abstract}

\maketitle

\section{Introduction}

This article is devoted to the study of the geometry of framed nilpotent representation variety of Jordan quiver. There are several motivations indicating that the above geometry  is interesting.
Among others, the most relevant results are listed as follows.
\begin{enumerate}
\item The Springer correspondence of type $A$ in ~\cite{BM83}.
\item Ginzburg's Lagrangian  construction of Weyl group algebra and Schur algebra of type $A$ in ~\cite{CG97}.
\item Lusztig's study of the geometry of nilpotent orbits of type $A$ in ~\cite{L81}.
\item Achar-Henderson's study of the geometry of enhanced nilpotent orbits of type $A$ in ~\cite{AH08}.
\item The author's geometric realization of the tensor product of Verma module and simple module of a quantized enveloping algebra in ~\cite{Li11}.
\item Stroppel and Webster's recent work on $q$-Fock space in ~\cite{SW11}.
\item Grojnowski and Nakajima's study of Hilbert scheme of $n$ points on $\mbb C^2$ in ~\cite{N99} and ~\cite{G96}.
\end{enumerate}

The main results we obtain in this paper can be thought of as a generalization of the above results (1)-(2) to the ``tensor product case''.  More precisely, we define a class of  resolutions of singularities, 
$\pi_{\underline 1} : \tF_{\underline 1} \to \E_{\nu, d, Q}$,  on the framed nilpotent representation varieties, by mimicking the one studied in ~\cite{Li11}. 
We  show that these resolutions are semismall maps so that the push forward of the intersection cohomology complex of $  \tF_{\underline 1}$ to $\E_{\nu, d, Q}$ is again a semisimple perverse sheaf, 
say $L_{\underline 1}$. Furthermore, we show the following main results in this paper.
\begin{enumerate}
\item [($1'$)] The simple perverse sheaves appeared in $L_{\underline 1}$ are all intersection cohomology complexes (with trivial local systems)  and whose supports can be described explicitly in Theorem ~\ref{L-fine}.

\item [($2'$)] The endomorphism ring of $L_{\underline 1}$ in the abelian category of semisimple perverse sheaves on $\E_{\nu, d, Q}$ is isomorphic to the group algebra of a product of symmetric groups.

\item [($3'$)]  The singular supports of the intersection cohomology complexes in $L_{\underline 1}$ form a lagrangian subvariety in the cotangent bundle of $\E_{\nu, d,Q}$, whose subvariety of stable points , after taking certain GIT quotient, is the tensor product variety of the lagrangian variety defined by Grojnowski in ~\cite[\S 3]{G96}.

\item [($4'$)] A second way to state  ($2'$) is that the top Borel-Moore homology of the Steinberg-type variety $ Y_{\underline 1}= \tF_{\underline 1} \times_{\E_{\nu, d, Q}}  \tF_{\underline 1}$ is isomorphic to the group algebra of a product of symmetric groups. Moreover, the irreducible components of largest dimension in $Y_{\underline 1}$ form a basis of the above group algebra in Theorem ~\ref{Y}. We suspect that this basis  is  the naive tensor product of the Borel-Moore homology basis constructed in ~\cite{CG97}.

\item [($5'$)] A slight modification of ($4'$) gives rise to a construction of tensor product of Schur algebra of type $A$ as well. 

\item[($6'$)] The intersection cohomology complexes all together for the various $\nu$ are considered. 
It is shown that  there is a Heisenberg action on the space spanned by the intersection cohomology complexes, isomorphic to tensor product of Fock spaces.
\end{enumerate}

We emphasis that in the simplest nontrivial case (for the parameter $d=1$), the above results are all known in the literature 
such as ~\cite{AH08}, ~\cite{T09}, ~\cite{FGT09} and ~\cite{FG10},  except   ($3'$), ($5'$)  and the Heisenberg action in ($6'$),

In summary, the results ($1'$), ($2'$),  ($4'$) and ($5'$)  are generalizations of (1) and (2), the result ($3'$) is closely related to ~\cite{N99} and ~\cite{G96} 
and the result ($6'$) is closely related to ~\cite{SW11} and ~\cite{Li11}. It will be interesting to develop a theory similar to the results in ~\cite{L81} and ~\cite{AH08} on the local information of the intersection cohomology complexes defined in this paper.

\tableofcontents

\section{Preliminary}

\subsection{Semismall map}

 We recall  the semismall map and its variants from ~\cite{GM83}, ~\cite{BM83} and ~\cite{CG97}.
Let $\mu: M\to N$ be a proper map with $M$ and $N$ irreducible varieties over the field $\mbb C$ of complex numbers.  
Suppose that $N = \sqcup  N_{\alpha}$
is an algebraic stratification such that the restriction map $\mu: \mu^{-1}(N_{\alpha}) \to N_{\alpha}$ is a locally trivial fibration.  
We say that $\mu$ is $semismall$ if for any $\alpha$,
\[
2 \dim \mu^{-1}(x) \leq \dim M - \dim N_{\alpha}, \quad x\in N_{\alpha}.
\]
A semismall map is called $strictly$ $semismall$ if the above inequality is an equality for any $\alpha$.
A semismall map is called $small$ if the above inequality is an equality only when $N_{\alpha}$ is open dense.
A small map is a $small$ $resolution$ if it is also a resolution of singularities. 

Consider the fiber product $M\times_N M$, it admits a stratification induced from that of $N$:
\[
M\times_N M = \sqcup_{\alpha} M_{\alpha} \times_N M_{\alpha}, \quad M_{\alpha} = \mu^{-1}(N_{\alpha}).
\]
If the map $\mu$ is a semismall map, we have
\begin{align*}
\dim M\times_N M & =\mbox{sup}_{\alpha}  \dim M_{\alpha} \times_N M_{\alpha} 
\leq  \mbox{sup}_{\alpha} \dim M_{\alpha} + \dim \mu^{-1} (x_{\alpha}) \\
&=\mbox{sup}_{\alpha}  \dim N_{\alpha} + 2 \dim \mu^{-1}(x_{\alpha})  \leq \dim M,
\end{align*}
where $x\in N_{\alpha}$. 
It is clear that $\dim M\times_N M \geq \dim M$.
This shows that   if $\mu$ is semismall, then $\dim M\times_N M = \dim M$.

If $\dim M\times_N M=\dim M$, we have 
\[
\dim M_{\alpha}\times_N M_{\alpha}   = \dim N_{\alpha} + 2\dim \mu^{-1}(x_{\alpha})  \leq \dim M.
\]

Therefore, we see that $\mu$ is semismall if and only if 
\begin{align}
\label{global}
\dim M\times_N M =\dim M.
\end{align}
Furthermore,  if $\mu $ is semismall, then $\mu$ is small if and only if  $\dim M_{\alpha}\times_N M_{\alpha}   < \dim M$ for any  non-open-dense stratum $N_{\alpha}$. 

\subsection{Notation} 

We fix some notations. Let $\nu$ be a nonnegative integer, we denote $S_{\nu}$ to be the permutation group of $\nu$ letters. We identify $S_{\nu}$ with the set of permutation matrices of rank $\nu$.
Let $\mbb C[S_{\nu}]$ be the group algebra of $S_{\nu}$. We use similar notation to denote the group algebra of products of $S_{\nu}$.

Let $\lambda$ be a partition of $\nu$, we denote by $\lambda^{\perp}$ its dual partition. We denote by $V_{\lambda}$ the irreducible representation of $S_{\nu}$.

\subsection{Heisenberg algebra}
\label{H1}
In this section, we should recall the representation theory of the infinite-dimensional Heisenberg algebra $\mathfrak s$ from ~\cite[9.13]{K90}.
By definition, $\mathfrak s$ is  a Lie  algebra over $\mbb C$ with a  basis $p_i$, $q_i$ and $c$ for $i=1, 2, \cdots$, subject to the following relations:
\[
[p_i, q_j ]=\delta_{ij} c,\quad  \mbox{all other brackets are zero}.
\]
For a given $d\in\mbb C^{\times}$, the  Lie algebra $\mathfrak s$ has an irreducible module $\mbb F(d)$ on the Fock space
\begin{align}
\label{fock}
\mbb F = \mbb C [x_1, x_2, \cdots]. 
\end{align}
defined by 
\[
p_i (f) = d \frac{\partial}{\partial x_i} (f), \quad q_i (f) = x_i f,\quad c(f) =d f,\quad \forall f\in \mbb F.
\]
For a sequence of positive integers $\underline d=(d_1,\cdots, d_m)$, we set
\begin{align}
\label{Fd}
\mbb F(\underline d) = \mbb F(d_1)\otimes \cdots \otimes  \mbb F(d_m).
\end{align}
to be the tensor product of $\mathfrak s$-modules $\mbb F(d_1),\cdots, \mbb F(d_m)$. 

Let $H$ be the unital associative algebra over $\mbb C$ generated by $p_i$, $q_i$ for $i\in \mbb N$ and subject to the following relations:
\begin{align}
p_i p_j = p_j p_i, \quad
q_i q_j = q_j q_i,\quad
p_i q_j = q_j p_i + \delta_{i j} 1, \quad \forall i, j\in \mbb N.
\end{align}
This is the quotient algebra of the universal enveloping algebra of the  Heisenberg algebra $\mathfrak s$ by the two-sided ideal generated by $c=1$. 
We have that   the algebra $H$ is isomorphic to the algebra $H'$ generated by $a_i $ and $b_i$ for $i\in \mbb N$ and  subject to the relations 
\begin{align}
a_i a_j = a_j a_i, \quad
b_i b_j =b_j b_i,\quad
a_i b_j = \sum_{k=0}^{\min (i, j)}  b_{j-k}a_{i-k}, \quad \forall i, j\in \mbb N,
\end{align}
where we set $a_0=1=b_0$. 

It can be deduced as follows. 
Consider the positive self dual Hopf algebra $R$ with a unique irreducible primitive element  defined in ~\cite[Chapter I]{Z81}. As an associative algebra, $R$ is isomorphic to the Fock space $\mbb F$.
Let $\Delta: R\to R\otimes R$ be the comultiplication on $R$ defined by $\Delta (x_n ) =\sum_{i=0}^n x_{n-i} \otimes x_i$ for any $n=1, 2, \cdots$. Let $\langle-, -\rangle : R\times R \to \mbb Z$ be the associated bilinear form.  
For any element $x\in R$, we define a linear map $x^*: R\to R$ by 
\[
x^* (y) = \mbox{id}\otimes \langle x, -\rangle ( \Delta(y)), \quad \forall y\in R.  
\]
Let $x: R\to R$ denote the linear map by multiplying $x$. 
By ~\cite[1.9]{Z81}, we see that the assignment $a_i \mapsto x_i$ and $b_i \mapsto x_i^*$, for any $i=1, 2, \cdots$, define a faithful  $H'$-action on $\mbb F$, i.e., an injective algebra homomorphism $H' \hookrightarrow \End (\mbb F)$.

Let $\rho_i$ be the primitive element in $\mbb F_i$ defined by  
\[
\langle \rho_i, \sum_{\substack{ k+l =i \\k, l >0}} \mbb F_k \mbb F_l \rangle =0, \quad  \langle \rho_i, \rho_i \rangle =1,
\]
where $\mbb F_i$ is the homogeneous component of $\mbb F$ of degree $i$.
Again by ~\cite[1.9]{Z81}, the assignment $p_i\mapsto \rho_i$ and $q_i\mapsto \rho_i^*$, for any $i=1, 2, \cdots$, define a faithful $H$-action on $\mbb F$: $H\hookrightarrow \End(\mbb F)$.

The collections $\{ x_i |i=1, 2, \cdots,\} $ and $\{ \rho_i| i=1, 2, \cdots\}$ both generate the algebra $\mbb F$. So the images of the subalgebras generated by
$p_i$ and $a_i$ for any $i=1, 2, \cdots$ in $H$ and $H'$ respectively coincide in $\End (\mbb F)$. Now the fact that  $(x+y)^* = x^* + y^*$ implies that the images of
$H$ and $H'$ coincide in $\End(\mbb F)$.  This shows that the two algebras are isomorphic to each other by taking into consideration of the gradings.

From the above analysis, we see that in order to define an action of $\mathfrak s$ on $\mbb F$ isomorphic to $\mbb F(1)$, 
it is equivalent to define an $H'$-action on the Fock space $\mbb F$. 
But an $H'$-action on $\mbb F$ is completely determined if the Fock space $F$ is associated with an algebra structure isomorphic to  the algebra $R$.

\section{Main result}

\subsection{An analogue of Springer resolution}
\label{Springer-1}

Let $\Gamma$ be the framed Jordan graph
 \[
\xy 
(30, 30) *+{\bullet}; (30, 30)*+{\bullet}; 
**\crv{(15, 30) & (30, 45) }; 
(40, 30)*+{\bullet.}
**\crv{(35, 30)}
\endxy
 \]
We fix an orientation  $Q$ and its opposite $\bar Q$ for $\Gamma$:  
\[
Q:  \xymatrix{
\bullet  \ar@(ul, dl) []_{\sigma}  \ar @{->}[r] ^{\rho} & \bullet 
},
\quad \quad
\bar Q: 
\xymatrix{
\bullet  \ar@(dl, ul) []^{\bar \sigma}  \ar @{<-}[r] ^{\bar \rho} & \bullet .
}
\]

To a pair $(\nu,d)$ of non negative integers, we fix a pair $(V, D)$ of vector spaces of dimensions $\nu$ and $d$, respectively.  We define  the representation space of $Q$ of dimension vector $(\nu, d)$ to be 
\[
\E_{\nu, d, Q } = \End (V) \times \Hom (V, D).
\]
This definition depends on the choice of the pair $(V, D)$.  However different choices give rise to isomorphic spaces. So we may neglect this ambiguity. If we want to emphasis the choice of the pair of vector spaces, we write $\E_{V, D, Q}$ instead of 
 $\E_{\nu, d, Q}$.
We denote by $x = (x_{\sigma}, x_{\rho})$ the elements in $\E_{\nu, d, Q}$ with $x_{\sigma}$ and $x_{\rho}$ in $\End(V)$ and $\Hom(V, D)$, respectively. 

Let us fix a composition $\underline d=(d_1,\cdots, d_m)$ of $d$, i.e., $d_1+\cdots+ d_m=d$. Without lost of generalities, we further assume that $d_i\neq 0$ for $1\leq i \leq m-1$.  We also fix a flag $\underline D$ of type $\underline d$:
\begin{align}
\label{fixedD}
\underline D = (D= D_1\supseteq \cdots \supseteq D_m\supseteq D_{m+1}=0),
\end{align}
such that $\dim D_i/D_{i+1}= d_{i}$ for any $i=1, \cdots, m$.

We call $\underrightarrow{\nu}= (\underline{ \nu_1},\cdots, \underline {\nu_m})$ a $multi$-$composition$ of $\nu$ if 
$\underline{ \nu_i}$ is a composition of $\nu_i$ for $1\leq i\leq m$ and 
$(\nu_1,\cdots, \nu_m)$ is a composition of $\nu$. Note that  $\underrightarrow{\nu}$ can be regarded as a composition of $\nu$ as well.
Given any flag $\underline V$ of type $\underrightarrow{\nu}$, we set $V_i$ to be the position at the flag such that 
$\dim V_i/V_{i+1} = \nu_i$ for any $1\leq i \leq m$.
We also set $V_{i, j}$ to  be the position such that $\dim V_{i, j}/V_{i, j+1} = \nu_{i, j}$, i.e.,  the $j$-th position in the composition $\underline{\nu_i}$.
In particular, we have $V_i = V_{i, 1}$ for any $1\leq i\leq m$.
For convenience, we set
$V_{i, a_i+1}=V_{i+1, 1}$ if $\underline{\nu_i}=(\nu_{i, 1},\cdots, \nu_{i, a_i})$.

Let $\F_{\underset{\rightarrow}{\nu}}$ be the variety of all flags of type $\underrightarrow{\nu}$ in $V$.

A pair $(x, \underline V)$ in $\E_{\nu, d, Q}\times \Fv$ is called $a$ $stable$ $pair$  if 
\begin{itemize}
\item $x_{\sigma}(V_{i, j}) \subseteq V_{i, j+1}$, for any $1\leq i\leq m$ and $1\leq j\leq a_i$;
\item $x_{\rho} (V_i) \subseteq D_i$, for any $1\leq i\leq m$.
\end{itemize}

Let $\tFv$ be the variety of all stable pairs in $\E_{\nu, d, Q}\times \Fv$. We have the following diagram
\begin{align}
\label{Springer}
\begin{CD}
\Fv @<\tau <<
\tFv @>\pi_{\underset{\to}{\nu}} >> \E_{\nu, d, Q} ,
\end{CD}
\end{align}
where the first map is the second projection and the second map is the first projection. 
Since $\Fv$ is a projective variety,  $\pi_{\underset{\to}{\nu}}$ is projective, hence proper. 

Let $\tNv$ be the variety of all stable pairs $(x_{\sigma}, \underline V)$, (where  $x_{\rho}$ is regarded as $0$). Then $\tau$ factors through $\tNv$:
\begin{align}
\label{factor}
\begin{CD}
\tFv  @>\tau_1 >> \tNv @>\tau_2 >> \Fv,
\end{CD}
\end{align}
where $\tau_1$ forgets $x_{\rho}$ and $\tau_2$ forgets $x_{\sigma}$.  Observe that $\tau_1$ and $\tau_2$ are vector bundles of respective fiber dimensions:
\[
f_1= \sum_{i\leq i'} \nu_{i} d_{i'},
\quad
f_2= \sum_{(i, j)< (i', j') }  \nu_{i, j} \nu_{i', j'},
\]
where $(i, j) < (i', j')$ is the lexicographic order.
So, $\tau$ is a vector bundle of fiber dimension $f_1+f_2$.
We see that $\tFv$ is a smooth irreducible variety of dimension $f_1+f_2 +\dim \Fv$, which is 
\begin{align}
\label{dimension-E}
 f=\sum_{i\leq i'} \nu_{i} d_{i'} + 2 \sum_{(i, j)< (i', j') }  \nu_{i, j} \nu_{i', j'}.
\end{align}

\begin{prop}
\label{pi-small}
The morphism $\pi_{\underset{\to}{\nu}}: \tFv \to \E_{ \underset{\to}{\nu} }$, where $\E_{ \underset{\to}{\nu} } =\mrm{im} (\pi_{\underset{\to}{\nu}})$, is semismall and a resolution of singularities.
\end{prop}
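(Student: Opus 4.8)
The plan is to verify the characterization~\eqref{global}, namely that $\dim \tFv \times_{\E_{\underset{\to}{\nu}}} \tFv = \dim \tFv$, and along the way to pin down enough of the stratification to conclude that $\pi_{\underset{\to}{\nu}}$ is a resolution. First I would observe that $\tFv$ is smooth and irreducible of dimension $f$ as computed in~\eqref{dimension-E}, so it only remains to understand the target and the fibers. The image $\E_{\underset{\to}{\nu}}$ should be described as a union of $\GL(V)$-orbit-like strata indexed by the combinatorial data recording the Jordan-type of $x_\sigma$ together with how $x_\rho$ interacts with the flag $\underline D$; concretely, I expect the relevant invariant to be, for each pair $(i,i')$ and each $k$, the dimension of the space cut out by applying $x_\sigma^{k}$ and projecting into $D_i/D_{i+1}$, i.e. a ``multi-Jordan type'' refining the enhanced nilpotent data of Achar--Henderson~\cite{AH08}. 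One checks that $\pi_{\underset{\to}{\nu}}$ is $\GL(V)$-equivariant and each such stratum is a single orbit (or at worst a locally trivial fibration over one), so that $\pi_{\underset{\to}{\nu}}$ restricted over a stratum is a locally trivial fibration, which is the hypothesis needed for the semismallness criterion.

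Next I would set up the fiber estimate. Over a point $x$ in a stratum, the fiber $\pi_{\underset{\to}{\nu}}^{-1}(x)$ is the variety of flags $\underline V$ of type $\underrightarrow\nu$ compatible with $x$ in the sense of the two stability conditions; this is a type-$A$ analogue of a Spaltenstein variety, and its dimension can be read off from the combinatorics of the Jordan blocks of $x_\sigma$ and the induced filtration from $x_\rho$. The key inequality to establish is
\[
2\dim \pi_{\underset{\to}{\nu}}^{-1}(x) \le \dim \tFv - \dim N_\alpha, \quad x\in N_\alpha,
\]
equivalently $\dim N_\alpha + 2\dim \pi_{\underset{\to}{\nu}}^{-1}(x) \le f$ for every stratum. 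Rather than proving this stratum by stratum with an explicit dimension count, the cleaner route is to bound $\dim \tFv \times_{\E_{\underset{\to}{\nu}}} \tFv$ directly: a pair of compatible flags $(\underline V, \underline V')$ for a common $x$ decomposes the Steinberg-type variety into pieces indexed by relative position data, and one estimates each piece by the familiar argument that the variety of $x_\sigma$ preserving a fixed pair of flags, together with the $x_\rho$ condition, has dimension at most $f$ minus the dimension of the product of the two flag varieties contributing that relative position. Summing (taking the sup) over the finitely many pieces gives $\dim \tFv \times_{\E} \tFv \le f = \dim \tFv$, and the reverse inequality is automatic since the diagonal already has dimension $\dim \tFv$. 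By~\eqref{global} this yields semismallness.

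For the resolution-of-singularities claim I would argue as follows. The map is proper (already noted, since $\Fv$ is projective) and $\tFv$ is smooth and irreducible, so it suffices to exhibit a dense open subset $U\subseteq \E_{\underset{\to}{\nu}}$ over which $\pi_{\underset{\to}{\nu}}$ is an isomorphism. The natural candidate is the stratum of maximal dimension, on which $x_\sigma$ has the ``most generic'' Jordan type compatible with $\underrightarrow\nu$ and the flag through the $x_\rho$-conditions is forced: there one checks that the compatible flag $\underline V$ is uniquely determined by $x$ (the filtration steps $V_{i,j}$ are recovered as $x_\sigma^{?}$-images or preimages of the $D_i$ via $x_\rho$), so the fiber is a single reduced point; since $\E_{\underset{\to}{\nu}}$ is irreducible (being the image of the irreducible $\tFv$) this stratum is dense, and a proper birational morphism from a smooth variety is a resolution.

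The main obstacle I anticipate is bookkeeping: correctly identifying the stratification of $\E_{\underset{\to}{\nu}}$ by the right combinatorial invariant and then getting the two dimension counts — $\dim N_\alpha$ and $\dim \pi_{\underset{\to}{\nu}}^{-1}(x)$, or equivalently the dimension of each piece of the Steinberg variety — to match up so that the sum is exactly $f$. The $x_\rho$-stability condition $x_\rho(V_i)\subseteq D_i$ couples the flag $\underline V$ to the fixed flag $\underline D$ in a way that is not present in the classical Springer setting, so the delicate point is to verify that this coupling contributes exactly the term $f_1 = \sum_{i\le i'}\nu_i d_{i'}$ and no more in the fiber-plus-base accounting, i.e. that the enhancement does not break semismallness. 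Once the combinatorics is organized — most naturally via the factorization~\eqref{factor} through $\tNv$, reducing part of the count to the known semismallness of the type-$A$ Springer-type map for $x_\sigma$ and treating the $x_\rho$ part as an affine-bundle contribution — the inequalities should fall out.
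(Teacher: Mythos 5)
Your outline follows the paper's own route --- in fact both of its routes at once. The paper first proves semismallness via the factorization (\ref{factor}) through $\tNv$, using the strict semismallness of the generalized Springer map $\mu_{\underset{\to}{\nu}}$ together with the elementary containment $\pi_{\underset{\to}{\nu}}^{-1}(x_{\sigma}, x_{\rho})\subseteq \mu_{\underset{\to}{\nu}}^{-1}(x_{\sigma})$; this containment is precisely what makes the $x_{\rho}$-coupling you worry about in your last paragraph harmless, since the fiber bound never sees $x_{\rho}$. The paper then gives a second proof by stratifying $\tFv\times_{\E_{\nu,d,Q}}\tFv$ according to the relative position of the two flags, which is exactly your ``cleaner route,'' and the resolution claim is likewise handled as you propose: over $X_0=\E_{\underset{\to}{\nu}}\cap(\mathcal O_{\lambda}\times\Hom(V,D))$, with $\mathcal O_{\lambda}$ the dense orbit in $\Nv$, the compatible flag is already determined by $x_{\sigma}$ alone (birationality of $\mu_{\underset{\to}{\nu}}$), not by any recovery procedure through $x_{\rho}$. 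The one concretely false assertion in your sketch is that the strata of $\E_{\underset{\to}{\nu}}$ cut out by the finer invariant recording how $x_{\rho}$ interacts with $\underline D$ are single orbits (or even finite in number): the paper's example section exhibits, for general $\underline d$, a one-parameter family of pairwise non-conjugate elements of $\E_{\underline 1}$ under $\G\times\mbf P_d$. This does not sink the argument, because the dimension estimate on the fiber product only requires the coarse stratification by the Jordan type of $x_{\sigma}$ (over which local triviality is standard) plus the fiber containment above --- but the orbit claim should be dropped.
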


\begin{proof}
From (\ref{global}), it is reduced to show that 
\[
\dim \tFv \times_{\E_{\nu, d, Q}} \tFv \leq \dim \tFv.
\]
Let 
\begin{align}
\label{mu}
\mu_{\underset{\to}{\nu}}: \tNv \to \End(V)
\end{align}
 be the first projection, which is the generalized Springer resolution. 
Let $\Nv$ be the image of $\mu_{\underset{\to}{\nu}}$. 
Let $\mathcal O_{\lambda}$, where $\lambda=(\lambda_1\geq \lambda_2 \geq\cdots \geq \lambda_m)$,  be the nilpotent $\G$-orbit in $\End(V)$ such that the sizes of  the Jordan blocks in its Jordan canonical form  are $\lambda_1, \lambda_2 , \cdots , \lambda_m$.  
Then the variety  $\Nv$ admits a stratification
$\Nv = \sqcup \mathcal O_{\lambda}$ where the union runs over all $\lambda$ such that $\mathcal O_{\lambda}$ is in $\Nv$.
It is well known that the map $\mu_{\underset{\to}{\nu}}$ is strictly semismall with respect to this stratification. 
Now, this stratification induces a stratification $\sqcup \tNv(\lambda)$ on $\tNv$ and a stratification $\sqcup \tFv(\lambda)$ of $\tFv$, where $\tFv(\lambda) = \tau_1^{-1} \mu_{\underset{\to}{\nu}}^{-1}(\mathcal O_{\lambda})$, 
since $\tau_1$ in (\ref{factor}) is a vector bundle.  
So we have a stratification 
\[
\tFv \times_{\E_{\nu, d, Q}} \tFv  = \sqcup \tFv (\lambda) \times_{\E_{\nu, d, Q}} \tFv  (\lambda).
\]
It is clear that $\pi_{\underset{\to}{\nu}}^{-1}(x_{\sigma}, x_{\rho}) \subseteq \mu_{\underset{\to}{\nu}}^{-1} (x_{\sigma})$. So we have
\begin{align*}
\dim  & \tFv \times_{\E_{\nu, d,Q}} \tFv   =\mbox{sup}_{\lambda} \dim   \tFv (\lambda) \times_{\E_{\nu, d, Q}} \tFv  (\lambda)\\
&\leq \mbox{sup}_{\lambda} \{ \dim \tFv(\lambda) + \dim  \pi_{\underset{\to}{\nu}}^{-1} (x_{\sigma}, x_{\rho})\}
\leq  \mbox{sup}_{\lambda} \{ \dim \tFv(\lambda) + \dim  \mu_{\underset{\to}{\nu}}^{-1} (x_{\sigma})\},
\end{align*}
where $(x_{\sigma}, x_{\rho})\in \mathcal O_{\lambda}\times \Hom (V, D)$. Since $\mu_{\underset{\to}{\nu}}$ is strictly semismall, we have 
\[
2 \dim   \mu_{\underset{\to}{\nu}}^{-1} (x_{\sigma}) = \dim \tNv - \dim \mathcal O_{\lambda}, \quad  \forall x_{\sigma}\in \mathcal O_{\lambda},
\]
which can be rewritten as 
\[
\dim   \mu_{\underset{\to}{\nu}}^{-1} (x_{\sigma}) = \dim \tNv - \dim \tNv(\lambda) =\dim \tFv-\dim \tFv (\lambda),
\]
for any $x_{\sigma}\in \mathcal O_{\lambda}$, where the last equality is due to the fact that $\tau_1$ is a vector bundle.
So we have 
$\dim \tFv(\lambda) + \dim  \mu_{\underset{\to}{\nu}}^{-1} (x_{\sigma}) = \dim \tFv$.
Hence we have 
$\dim \tFv \times_{\E_{\nu, d,Q}} \tFv \leq \dim \tFv$. This shows that $\pi_{\underset{\to}{\nu}}$ is semismall.

Let $\mathcal O_{\lambda} \subseteq \Nv$ be the open dense orbit in $\Nv$. 
Let $X_0 = \E_{\underset{\to}{\nu}} \cap \mathcal O_{\lambda}\times \Hom(V, D)$. 
Since $(x_{\sigma}, 0)\in \E_{\underset{\to}{\nu}}$ for any $x_{\sigma} \in \mathcal O_{\lambda}$, we see that 
$X_0$ is non empty. So $X_0$ is an open dense subvariety in $\E_{\underset{\to}{\nu}}$. 
Moreover, the restriction $\pi_{\underset{\to}{\nu}}^{-1}( X_0) \to X_0$ is an isomorphism due to the fact that 
the restriction  map $\mu_{\underset{\to}{\nu}}^{-1} (\mathcal O_{\lambda}) \to \mathcal O_{\lambda}$ is an isomorphism.  
Therefore, $\pi_{\underset{\to}{\nu}}$ is a resolution of singularity.  The proposition follows.
\end{proof}

Note that $\E_{\underset{\to}{\nu}}$ is irreducible due to the fact that $\tFv$ is irreducible. It is also clear that if $x\in \E_{\underset{\to}{\nu}}$, then $x_{\sigma}$ is nilpotent. 
So $\E_{\underset{\to}{\nu}}$ is contained in $\Nv\times \Hom (V, D)$.

In what follows, we shall produce a second proof of Proposition ~\ref{pi-small}.  We set
\[
Y = \tFv \times_{\E_{\nu, d, Q}} \tFv , \quad Z=\tNv \times_{\Nv} \tNv.
\]
The factorization (\ref{factor}) of the map $\pi_{\underset{\to}{\nu}}$ induces the following maps
\begin{align}
\label{factor-2}
Y \to Z \to \Fv\times \Fv.
\end{align}

The diagonal action of $\G$ on $\Fv\times \Fv$ has only finitely many orbits parametrized by the set 
$\Theta$ of all square matrices $M=(m_{(i, j), (k, l)})_{1\leq i, k \leq m, 1\leq j \leq a_i, 1\leq l \leq a_k}$ of  size $\nu=\nu_1+\cdots +\nu_n$ such that
\begin{itemize}
\item $m_{(i, j), (k, l)} \in \mbb N$;
\item $\sum_{ k, l} m_{(i, j), (k, l)} = \nu_{i, j} $ and  $\sum_{i, j} m_{(i, j), (k, l)} = \nu_{k, l}$. 
\end{itemize} 
The correspondence of the set of $\G$ orbits and the set $\Theta$ is given by assigning  a pair $(\underline V, \underline V')$ of flags to the matrix $M$ whose entry $m_{(i, j), (k, l)}$ is defined by
\[
\dim \frac{V_{i, j+1}  + V_{i, j } \cap V'_{k, l}} { V_{i, j+1} + V_{i, j } \cap V'_{k, l+1}},
\]
where, by convention, $V_{i, 0} = V_{i-1, a_{i-1}}$ and $V_{i, a_i +1} = V_{i+1, 1}$. 

Let $\mathcal O_M$ be the $\G$ orbit indexed by $M$. So we have a stratification $\sqcup \mathcal O_M$ of $\Fv\times \Fv$.  Such a stratification induces stratifications on $Y$ and $Z$:
\[
Y = \sqcup Y_M, \quad Z=\sqcup Z_M, 
\]
where $Y_M$ and $Z_M$ are the preimage of the orbit $\mathcal O_M$ under the morphisms in (\ref{factor-2}). 
From ~\cite[2.1]{BLM90}, we have
\[
\dim \mathcal O_M = \sum m_{(i, j), (k, l)} m_{(i', j'), (k', l')},
\]
where the sum is over the quadruples such that $(i, j)> (i', j') $ or $(k,l) > (k', l')$. 
Moreover the map $Z_M\to O_M $ is a vector bundle of fiber dimension 
$\sum m_{(i, j), (k, l)} m_{(i', j'), (k', l')}$ where the sum is over all quadruples such that $(i, j) < (i', j')$ and $(k, l) <(k', l')$.
We have that $Z_M$ is a locally closed, smooth,  and irreducible subvariety of $Z$ of dimension 
\[
\dim Z_M = \nu^2 - \sum_{(i, j)} \nu_{i,j}^2.
\]
Similarly, the map $Y_M \to Z_M$ is a vector bundle of fiber dimension $ \sum_{i, k \leq r} n_{i, k} d_r$, where 
\[
n_{i,k} =\dim \frac{V_{i+1} + V_i \cap V'_k}{V_{i+1}+ V_i \cap V'_{k+1}}, \quad \forall  (\underline V, \underline V')\in \mathcal O_M.
\]
So the variety $Y_M$ is a locally closed, irreducible, subvariety of $Y$ of dimension 
\[
\dim Y_M =  \nu^2 - \sum_{i, j} \nu_{i,j}^2 + \sum_{i, k \leq r} n_{i, k} d_r.
\]
Observe that 
\begin{align}
\label{estimate}
\sum_{i, k \leq r} n_{i, k} d_r \leq \sum_{i \leq r} \nu_i d_r.
\end{align}
So we have 
\[
\dim Y_M \leq  \nu^2 - \sum_{i, j} \nu_{i,j}^2 + \sum_{i \leq r} \nu_i d_r = \dim \tFv,
\]
for any $M\in \Theta$. We have proved in a second way  that the morphism $\pi_{\underset{\to}{\nu}}$ is semismall.

Moreover, we have

\begin{cor}
\label{relevant}
$\dim Y_M =\dim \tFv$ if and only if $M$ is a diagonal block matrix, whose diagonal blocks have the sizes   $\nu_1, \cdots, \nu_m$.
\end{cor}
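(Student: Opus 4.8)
The plan is to convert the numerical equality $\dim Y_M=\dim\tFv$ into a combinatorial condition on the matrix $M$ and then solve that condition. Rewriting (\ref{dimension-E}) as $\dim\tFv=\nu^2-\sum_{i,j}\nu_{i,j}^2+\sum_{i\le r}\nu_i d_r$ and using the formula for $\dim Y_M$ displayed just above this corollary, we see that $\dim Y_M=\dim\tFv$ is equivalent to
\[
\sum_{r}d_r\Bigl(\sum_{i\le r}\ \sum_{k\le r}n_{i,k}\Bigr)=\sum_{r}d_r\Bigl(\sum_{i\le r}\nu_i\Bigr).
\]

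The key inputs are the two margin identities $\sum_{k=1}^{m}n_{i,k}=\nu_i$ and $\sum_{i=1}^{m}n_{i,k}=\nu_k$. The first is a one-line telescoping: for fixed $i$ and fixed $(\underline V,\underline V')$, the spaces $W_k=V_{i+1}+V_i\cap V'_k$ form a decreasing chain with $W_1=V_i$ and $W_{m+1}=V_{i+1}$, and $n_{i,k}=\dim W_k/W_{k+1}$. The second is the same statement with the two flags interchanged; equivalently, a short computation with the modular law (cf.\ \cite[2.1]{BLM90}) shows that $n_{i,k}=\sum_{j,l}m_{(i,j),(k,l)}$ is the sum of $M$ over its $(i,k)$-block, so that both identities become the row- and column-sum conditions defining $\Theta$. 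Granting them, $\sum_{k\le r}n_{i,k}\le\nu_i$ with equality iff $n_{i,k}=0$ for all $k>r$; hence the left-hand sum in the display is bounded by the right-hand one \emph{term by term in $r$} (this is (\ref{estimate})), and, since $d_r\ge 0$, the displayed equality holds iff the two $r$-th terms agree for every $r$ with $d_r\neq 0$. As the $r=m$ term is $\sum_i\nu_i=\nu$ on both sides and $d_r>0$ for $1\le r\le m-1$, this is in turn equivalent to $n_{i,k}=0$ whenever $i\le r<k$ for some $r\in\{1,\dots,m-1\}$, i.e.\ to $n_{i,k}=0$ for all $i<k$.

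It then remains to read off the shape of $M$. From $n_{i,k}=0$ for $i<k$ together with the two margin identities, an easy induction — row $1$ forces $n_{1,1}=\nu_1$, hence column $1$ forces $n_{i,1}=0$ for $i>1$, and one repeats — yields $n_{i,k}=\delta_{ik}\nu_i$. Since the $m_{(i,j),(k,l)}$ are nonnegative integers summing over the $(i,k)$-block to $n_{i,k}$, the vanishing of the off-diagonal $n_{i,k}$ is equivalent to $m_{(i,j),(k,l)}=0$ for all $i\neq k$, i.e.\ to $M$ being block-diagonal with $i$-th diagonal block (on the indices $(i,1),\dots,(i,a_i)$) of entry-sum $n_{i,i}=\nu_i$ — exactly the asserted form. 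Conversely, such an $M$ has all off-diagonal $n_{i,k}$ equal to $0$, so (\ref{estimate}) becomes an equality and $\dim Y_M=\dim\tFv$.

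The only step beyond bookkeeping is the identity $\sum_i n_{i,k}=\nu_k$, i.e.\ the symmetry between the two flags underlying $M$ (equivalently, the block description of $n_{i,k}$ in terms of $M$); granting it, the rest is elementary telescoping together with an induction on a lower-triangular nonnegative integer matrix with prescribed row and column sums.
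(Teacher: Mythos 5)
Your proof is correct and follows essentially the route the paper intends: the corollary is stated as an immediate consequence of the inequality $\dim Y_M \leq \dim \tFv$ obtained from (\ref{estimate}), and you simply carry out the equality analysis there, using the margin identities $\sum_k n_{i,k}=\nu_i$, $\sum_i n_{i,k}=\nu_k$ and the standing assumption $d_r\neq 0$ for $r\leq m-1$ to force $n_{i,k}=\delta_{ik}\nu_i$ and hence block-diagonality of $M$.
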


Another way to state Corollary ~\ref{relevant} is as follows.

\begin{cor}
\label{irreducible}
The irreducible components in $Y$ of largest dimension equal to $\dim Y$ are of the form $\overline{Y_M}$ 
where $M$ is a diagonal block matrix, whose diagonal blocks have the sizes   $\nu_1, \cdots, \nu_m$.
\end{cor}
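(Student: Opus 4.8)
The plan is to deduce this statement directly from Corollary \ref{relevant} together with the dimension bookkeeping established just before it. First I would recall that $Y = \tFv \times_{\E_{\nu, d, Q}} \tFv$ is stratified as $Y = \sqcup_{M \in \Theta} Y_M$, where each $Y_M$ is locally closed and irreducible of dimension $\dim Y_M = \nu^2 - \sum_{i,j} \nu_{i,j}^2 + \sum_{i,k \leq r} n_{i,k} d_r \leq \dim \tFv$, and that $\dim Y = \dim \tFv$ since $\pi_{\underset{\to}{\nu}}$ is semismall (equation (\ref{global}) applied to the first proof of Proposition \ref{pi-small}, or the second proof just given). Since $\Theta$ is a finite set, $Y$ is a finite union of the irreducible locally closed subsets $Y_M$, so every irreducible component of $Y$ is the closure $\overline{Y_M}$ of exactly one stratum $Y_M$, namely one that is not contained in the closure of any other stratum. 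In particular an irreducible component $C$ of $Y$ has $\dim C = \dim Y = \dim \tFv$ if and only if $C = \overline{Y_M}$ for some $M$ with $\dim Y_M = \dim \tFv$.

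Next I would invoke Corollary \ref{relevant}, which characterizes exactly those $M$: $\dim Y_M = \dim \tFv$ iff $M$ is a block-diagonal matrix with diagonal block sizes $\nu_1, \dots, \nu_m$. Combining the two observations, the top-dimensional irreducible components of $Y$ are precisely the sets $\overline{Y_M}$ with $M$ of this block-diagonal shape. One small point to verify is that distinct such $M$ give genuinely distinct components, i.e. that none of these top strata lies in the closure of another; but this is automatic, since all of them have the same dimension $\dim \tFv$ and a stratum of dimension $d$ can only lie in the closure of a stratum of dimension $> d$ (closures of strata are unions of strata of lower or equal dimension, and equal-dimensional strata are disjoint, being distinct locally closed irreducibles of the maximal dimension). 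Hence each such $\overline{Y_M}$ is maximal among the $Y_{M'}$, so it is a genuine irreducible component.

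I do not anticipate a serious obstacle here: the corollary is essentially a restatement of Corollary \ref{relevant} once one notes that for a variety stratified into finitely many irreducible locally closed pieces, the irreducible components are the closures of the maximal strata, and that a stratum of maximal dimension is automatically maximal. The only thing worth being careful about is the logical structure — making explicit that ``largest dimension'' strata cannot be absorbed into closures of other strata — but this follows from the general fact just cited about dimensions of strata and their closures, so the argument is short.
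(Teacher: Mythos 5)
Your proposal is correct and follows the paper's intent exactly: the paper presents Corollary \ref{irreducible} as ``another way to state'' Corollary \ref{relevant}, and your argument simply makes precise the standard fact that for a finite stratification into irreducible locally closed pieces, the top-dimensional irreducible components are the closures of the strata of maximal dimension. The care you take in checking that no top-dimensional stratum is absorbed into the closure of another is a reasonable (if routine) addition to what the paper leaves implicit.
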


In particular, we have

\begin{cor}
The morphism $\pi_{\underset{\to}{\nu}}$ is small if each partition $\underline \nu_i$ in $\underset{\to}{\nu}$ consists of only one part for any $1\leq i\leq m$.
\end{cor}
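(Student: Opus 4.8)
The plan is to deduce this from Corollary~\ref{irreducible} together with the criterion recorded at the end of Section~2.1: since $\pi_{\underset{\to}{\nu}}$ is already a semismall resolution by Proposition~\ref{pi-small}, it is small if and only if $\dim M_\alpha\times_N M_\alpha<\dim\tFv$ for every non-open-dense stratum $N_\alpha$ of $\E_{\underset{\to}{\nu}}$, where $M_\alpha=\pi_{\underset{\to}{\nu}}^{-1}(N_\alpha)$ and $Y=\tFv\times_{\E_{\underset{\to}{\nu}}}\tFv$. I will first show that the hypothesis on $\underset{\to}{\nu}$ forces the diagonal $\Delta_{\tFv}\subseteq Y$ to be the \emph{unique} irreducible component of $Y$ of top dimension $\dim Y=\dim\tFv$, and then show that this uniqueness yields the desired strict inequalities.

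For the first step, specialize the combinatorics of $\Theta$. If every $\underline{\nu_i}$ consists of a single part, then $a_i=1$ and $\nu_{i,1}=\nu_i$ for all $i$, so a matrix $M\in\Theta$ is just an $m\times m$ matrix of nonnegative integers whose $i$-th row and $i$-th column both sum to $\nu_i$; its ``diagonal blocks'' in the sense of Corollary~\ref{relevant} are now its diagonal entries, so the block-diagonal condition there forces $M=M^{\circ}:=\mathrm{diag}(\nu_1,\dots,\nu_m)$. Substituting $\underline V'=\underline V$ into the formula defining the entries of $M$ gives $M^{\circ}$, so $\mathcal O_{M^{\circ}}$ is the diagonal $\G$-orbit in $\Fv\times\Fv$; hence $Y_{M^{\circ}}=\Delta_{\tFv}$, which is closed in $Y$. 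By Corollary~\ref{irreducible}, the irreducible components of $Y$ of dimension $\dim Y$ are exactly the $\overline{Y_M}$ for $M$ block-diagonal of the above type; in the present case only $M^{\circ}$ qualifies, so $\Delta_{\tFv}$ is the unique such component.

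For the second step, suppose for contradiction that some non-open-dense stratum $N_\alpha$ of $\E_{\underset{\to}{\nu}}$ satisfies $\dim M_\alpha\times_N M_\alpha=\dim\tFv$. Since $\dim Y=\dim\tFv$, any irreducible component of $\overline{M_\alpha\times_N M_\alpha}$ of maximal dimension has dimension $\dim Y$ and therefore is an irreducible component of $Y$, hence equals $\Delta_{\tFv}$ by the first step; thus $\Delta_{\tFv}\subseteq\overline{M_\alpha\times_N M_\alpha}$. Applying the proper morphism $\pi_{\underset{\to}{\nu}}\circ\mrm{pr}_1\colon Y\to\E_{\underset{\to}{\nu}}$, which maps $\Delta_{\tFv}$ onto all of $\E_{\underset{\to}{\nu}}$ and maps $M_\alpha\times_N M_\alpha$ into $N_\alpha$, we obtain $\E_{\underset{\to}{\nu}}\subseteq\overline{N_\alpha}$, contradicting that $N_\alpha$ is non-open-dense. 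Hence the strict inequality holds over every non-open-dense stratum and $\pi_{\underset{\to}{\nu}}$ is small.

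The only delicate point is reconciling the two stratifications involved: the criterion of Section~2.1 is stated in terms of the (implicit) stratification of the target $\E_{\underset{\to}{\nu}}$, while Corollaries~\ref{relevant}--\ref{irreducible} are expressed through the $\G$-orbit stratification of $\Fv\times\Fv$. The bridge is exactly the observation of the second step, that a top-dimensional contribution lying over \emph{any} target stratum must, after taking closure, contain $\Delta_{\tFv}$ and hence dominate $\E_{\underset{\to}{\nu}}$; once the one-part hypothesis guarantees via Corollary~\ref{irreducible} that $\Delta_{\tFv}$ is the only top-dimensional component of $Y$, the argument concludes.
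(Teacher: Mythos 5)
Your proof is correct and follows exactly the route the paper intends: the corollary is stated there as an immediate consequence of Corollary~\ref{relevant} (only the diagonal stratum $Y_{M^\circ}=\Delta_{\tFv}$ attains top dimension when each $\underline{\nu_i}$ has one part) together with the smallness criterion at the end of Section~2.1. Your bridging argument between the $\G$-orbit stratification of $\Fv\times\Fv$ and the stratification of the target $\E_{\underset{\to}{\nu}}$ — that a top-dimensional piece over any stratum would have to contain $\Delta_{\tFv}$ after closure and hence dominate $\E_{\underset{\to}{\nu}}$ — correctly supplies the detail the paper leaves implicit.
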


\subsection{A class  of simple perverse sheaves}

We set
\[
L_{\underset{\to}{\nu}} = (\pi_{\underset{\to}{\nu}})_! (\mbb C_{\tFv})[\dim \tFv].
\]
Since $\tFv$ is smooth and $\pi_{\underset{\to}{\nu}}$ is proper, we see that $L_{\underset{\to}{\nu}}$ is semisimple by the Decomposition theorem (see ~\cite{BBD82}). 
By Proposition ~\ref{pi-small}, we see that $L_{\underset{\to}{\nu}} $ is  a perverse sheaf. We refer to ~\cite{CG97} and the references therein for a proof of this fact. 
So $L_{\underset{\to}{\nu}} $ is a direct sum of simple perverse sheaves (without shifts) on $\E_{\nu, d, Q}$, i.e.,
\[
L_{\underset{\to}{\nu}}  = \oplus P\otimes V_P,
\]
where $P$ runs over all simple perverse sheaves on $\E_{\nu, d,Q}$ and $V_P$ is certain vector subspace determined by $P$ and $L_{\underset{\to}{\nu}} $.
Furthermore, since $\pi_{\underset{\to}{\nu}}$ is a resolution of singularities, we have 
\begin{align}
\label{leading}
L_{\underset{\to}{\nu}} = \IC(\E_{\underset{\to}{\nu}}) \oplus M,
\end{align}
where $M$ is a perverse sheaf such that its support is strictly contained in $\E_{\underset{\to}{\nu}}$.  If $\underset{\to}{\nu}$ is a composition of partitions $\underline \lambda= (\lambda_1,\cdots, \lambda_m)$, we write
\[
\IC (\E_{\underline \lambda}) = \IC(\E_{\underset{\to}{\nu}}).
\]

\begin{lem}
\label{unique}
We have $\IC (\E_{\underline \lambda}) \neq \IC (\E_{\underline{ \lambda'}})$ if $\underline \lambda \neq \underline {\lambda'}$, i.e., $\lambda_i \neq \lambda_i'$ for some $i$.
\end{lem}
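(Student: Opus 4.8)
The plan is to distinguish the two intersection cohomology complexes by their supports, so the claim reduces to showing $\E_{\underline\lambda}\neq\E_{\underline{\lambda'}}$ as subvarieties of $\E_{\nu,d,Q}$ whenever $\underline\lambda\neq\underline{\lambda'}$. Since $\IC(\E_{\underline\lambda})$ is supported on $\overline{\E_{\underline\lambda}}$ and is the unique simple perverse sheaf with that support and trivial local system on the open stratum, two such IC sheaves coincide if and only if their supports coincide; thus it suffices to recover $\underline\lambda$ from the variety $\E_{\underline\lambda}$. The natural invariant to extract is the data of the generic point: by the remark following Proposition~\ref{pi-small}, every $x=(x_\sigma,x_\rho)\in\E_{\underline\lambda}$ has $x_\sigma$ nilpotent, and by the proof of that proposition the image $\pi_{\underset{\to}{\nu}}(\tFv)$ contains $(x_\sigma,0)$ for $x_\sigma$ in the open dense orbit $\mathcal O_{\underline\lambda}\subseteq\Nv$. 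So the first projection $\E_{\underline\lambda}\to\End(V)$ has image $\Nv=\overline{\mathcal O_{\underline\lambda}}$, and the generic Jordan type of the $\End(V)$-component of a point of $\E_{\underline\lambda}$ is exactly the partition obtained by sorting $\underline\lambda=(\lambda_1,\dots,\lambda_m)$ into decreasing order.

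First I would make precise that $\E_{\underset{\to}{\nu}}=\mathrm{im}(\pi_{\underset{\to}{\nu}})$ depends on $\underset{\to}{\nu}$ only through the underlying composition $(\nu_1,\dots,\nu_m)$ together with the partitions $\underline\nu_i$ — but when $\underset{\to}{\nu}$ already \emph{is} a composition of partitions $\underline\lambda$ (so each $\underline\nu_i=(\lambda_i)$ has one part), the flag type records precisely that $x_\sigma$ preserves a flag $V_1\supseteq\cdots\supseteq V_m$ with $\dim V_i/V_{i+1}=\lambda_i$ and is nilpotent on each graded piece in the strongest sense ($x_\sigma(V_i)\subseteq V_{i+1}$), while $x_\rho(V_i)\subseteq D_i$. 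The generalized Springer map $\mu_{\underset{\to}{\nu}}:\tNv\to\End(V)$ then has image $\Nv=\overline{\mathcal O_\lambda}$ where $\lambda$ is the decreasing rearrangement of $\underline\lambda$, and its generic fiber over $\mathcal O_\lambda$ is a single point. Projecting $\E_{\underline\lambda}$ to $\End(V)$ and taking the unique open dense orbit in the closure recovers $\lambda=\mathrm{sort}(\underline\lambda)$; but this only recovers $\lambda$, not $\underline\lambda$ itself, so a finer invariant is still needed when $\lambda_i=\lambda_j$ for $i\neq j$ is not the issue — rather the issue is that distinct \emph{orderings} $\underline\lambda$ with the same sorted partition could a priori give the same variety.

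The key step — and the main obstacle — is therefore to show that the ordered composition $\underline\lambda$, not merely its sorted partition, is determined by $\E_{\underline\lambda}$. Here I would use the $x_\rho$-component: a generic point of $\E_{\underline\lambda}$ lies over the open orbit $\mathcal O_\lambda$, and the fiber of $\pi_{\underset{\to}{\nu}}$ over such a point (which by the resolution property is a single point) pins down a canonical $x_\sigma$-stable flag $V_\bullet$ of type $(\lambda_1,\dots,\lambda_m)$, and along it $x_\rho$ satisfies $x_\rho(V_i)\subseteq D_i$. Equivalently, I would analyze the second projection $\E_{\underline\lambda}\to\Hom(V,D)$ (or better, the composition with $\Hom(V,D)\to\Hom(V,D/D_i)$) and compute the generic rank of the induced maps; the constraints $x_\rho(V_i)\subseteq D_i$ impose that $x_\rho$ maps the canonical subspace $V_i$ (which is intrinsically $\sum_j\ker x_\sigma^j\cap(\text{appropriate image})$ — more carefully, $V_i$ is determined by $x_\sigma$ together with the Jordan structure since the flag type is fixed and the map is a resolution) into $D_i$, so $\dim D/D_i=d_{i-1}+\cdots$ is read off from the generic codimension of the image of $x_\rho\vert_{V_i}$. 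Carrying this out requires knowing that the canonical flag $V_\bullet$ attached to a generic point is recoverable purely from $(x_\sigma,x_\rho)\in\E_{\underline\lambda}$ and not from the extra choice made in $\tFv$; since $\pi_{\underset{\to}{\nu}}$ is birational onto $\E_{\underline\lambda}$ this is automatic generically, so the codimensions $\dim D/D_i$, hence the $d_i$, hence — combined with the sorted partition $\lambda$ already recovered — the full ordered data $\underline\lambda$, are invariants of $\E_{\underline\lambda}$. I expect the bookkeeping identifying $V_i$ with an $x_\sigma$-canonical subspace and matching its image under $x_\rho$ against the flag $\underline D$ to be the delicate part; once that is in place, $\underline\lambda\neq\underline{\lambda'}$ forces either a different sorted partition or a different codimension sequence, hence $\E_{\underline\lambda}\neq\E_{\underline{\lambda'}}$, hence $\IC(\E_{\underline\lambda})\neq\IC(\E_{\underline{\lambda'}})$.
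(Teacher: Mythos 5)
Your overall strategy (reduce to $\E_{\underline \lambda}\neq \E_{\underline{\lambda'}}$, use the projection to $\End(V)$ for a coarse invariant, then use the $x_\rho$-component together with the canonical flag over the generic point for the fine one) is the same as the paper's, but the step that actually has to do the work contains a genuine gap. The invariant you propose to extract in the hard case --- the ``generic codimension of the image of $x_\rho|_{V_i}$,'' which you identify with $\dim D/D_i$ and hence with the $d_i$ --- is part of the \emph{fixed} data: the flag $\underline D$ and the numbers $d_i$ are chosen once and for all and do not depend on $\underline\lambda$. So ``a different codimension sequence'' can never occur, and your argument cannot separate two compositions of partitions with the same sorted concatenation of parts (e.g.\ $((2),(1,1))$ versus $((1,1),(2))$, or versus $((2,1),(1))$), which is exactly the case the lemma is about, since the projection to $\End(V)$ only recovers the multiset of all parts (as the dual partition of the generic Jordan type --- note you also have the duality backwards there). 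The paper's proof instead fixes one generic nilpotent $x_\sigma$, uses birationality of the Springer-type map to get \emph{unique} stable flags $\underline V$ and $\underline V'$ for the two types, shows that the Jordan type of the restriction $x_\sigma|_{V_2}$ is $\lambda_2^{\perp}$ (and $(\lambda_2')^{\perp}$ for $V_2'$), so that $\lambda_2\neq\lambda_2'$ forces $V_2\neq V_2'$ \emph{as subspaces}, and then exhibits an explicit $x_\rho$ with $x_\rho(V_2)\subseteq D_2$ but $x_\rho(V_2')\not\subseteq D_2$; that last existence statement, not a rank count, is what separates the two varieties.

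A secondary but real problem: in your middle paragraph you treat a composition of partitions $\underline\lambda=(\lambda_1,\dots,\lambda_m)$ as if each $\lambda_i$ were a single integer, writing $\dim V_i/V_{i+1}=\lambda_i$ and $x_\sigma(V_i)\subseteq V_{i+1}$. In the paper each $\lambda_i$ is itself a partition of $\nu_i$, the flag has sub-steps $V_{i,j}$ inside each block, and the stability condition is $x_\sigma(V_{i,j})\subseteq V_{i,j+1}$; your reading corresponds only to the special case where every $\lambda_i$ has one part. Fixing this is necessary before the ``canonical flag'' discussion can even be set up correctly, since the subspace $V_i=V_{i,1}$ attached to a generic $x_\sigma$ depends on the full partition $\lambda_i$, and that dependence is precisely what the proof must exploit.
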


\begin{proof}
When $m=1$, the statement  is clear because the variety $\E_{\underline \lambda}$ is  the closure of the nilpotent orbit in $\End(V)$ whose Jordan type is $\lambda_1^{\perp}$, the dual partition of $\lambda_1$.

We now prove the statement for $m=2$, i.e., $\underline \lambda = (\lambda_1, \lambda_2)$. 
It suffices to show that $\E_{\underline \lambda} \neq \E_{\underline {\lambda'}}$.
This is true if the associated partitions of $\underline \lambda$ and $\underline {\lambda'}$ are different because  the projections of the two varieties to $\End (V)$ are different.
Suppose that $\underline \lambda$ and $\underline {\lambda'}$ have the same associated partition, say $\mu$. 
Let $\mu^{\perp}$ be its dual partition. 
Then $X_{0, \underline \lambda} =\E_{\underline \lambda} \cap (\mathcal O_{\mu^{\perp}} \times \Hom(V, D) )$ is  open dense in $\E_{\underline \lambda}$. 
Thus, to show that $\E_{\underline \lambda} \neq \E_{\underline {\lambda'}}$, it is enough to show that $X_{0, \underline \lambda} \neq X_{0, \underline{ \lambda'}}$.  

Fix an element $x_{\sigma} \in \mathcal O_{\mu^{\perp}}$, there exists a unique flag $\underline V$ (resp. $\underline V'$) of type $\underline \lambda$ (resp. $\underline{\lambda'}$) such that 
the pair $(x_{\sigma}, \underline V)$ (resp. $(x_{\sigma}, \underline V')$)  is a stable pair.  (The uniqueness of the flags $\underline V$ and  $\underline V'$  is due to the fact that the map $\mu_{\underset{\to}{\nu}}$ in (\ref{mu}) is  a resolution of singularities.)
Recall that $V_2$ is the step at $\underline V$ such that $\dim V_2 = |\lambda_2|$. Let $x_{\sigma}|_{V_2}$  be the restriction of $x_{\sigma}$ to $V_2$. 
Then the type of   $ x_{\sigma}|_{V_2}$  is  $\lambda_2^{\perp}$ due to the fact that $\underline V$ is unique. 
Similarly, the type of $x_{\sigma}|_{V'_2}$ is $(\lambda'_2)^{\perp}$.
The assumption that  $\underline \lambda \neq \underline {\lambda'}$ implies  $\lambda_2\neq \lambda_2'$. So the restrictions   $x_{\sigma}|_{V_2} $ and $x_{\sigma}|_{V_2'}$ have different types $\lambda_2^{\perp}$ and $(\lambda_2')^{\perp}$, respectively. 
Thus $V_2\neq V_2'$. 

By definition, we know that a pair $(x_{\sigma}, x_{\rho})$ is in $X_{0,\underline \lambda}$ (resp. $X_{0, \underline{\lambda'}}$) if and only if $x_{\rho} (V_2)\subseteq D_2$
(resp. $x_{\rho}(V'_2)\subseteq D_2$). Since $V_2\neq V_2'$, there exists an element $x_{\rho}$  such that $x_{\rho}(V_2)\subseteq D_2  $ and $x_{\rho}(V_2')\not \subseteq D_2$.
This implies that  $(x_{\sigma}, x_{\rho})\in X_{0, \underline \lambda} -X_{0, \underline{ \lambda'}}$.
Thus, $X_{0, \underline \lambda} \neq X_{0, \underline{ \lambda'}}$. The statement for $m=2$ holds, then.

The general case can be proved in a similar way. We leave it to the reader.
\end{proof}

\begin{lem}
\label{nu-lambda}
We have 
$\E_{\underset{\to}{\nu}} = \E_{\underline \lambda}$ if $\underline{\nu_i}$ is obtained from $\lambda_i$ by a permutation of the parts in  $\lambda_i $ for any $1\leq i\leq n$.
\end{lem}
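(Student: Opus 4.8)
The plan is to reduce the statement to a linear-algebra fact about a flag shifted down by a single endomorphism, and to prove that fact by an explicit adjacent-transposition argument.

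First I would record two observations about a stable pair $(x_\sigma,\underline V)$. The conditions $x_\sigma(V_{i,j})\subseteq V_{i,j+1}$ say exactly that $x_\sigma$ maps each step of the full flag underlying $\underline V$ into the next step; in particular every $V_{i,j}$, and hence every coarse step $V_i=V_{i,1}$, is $x_\sigma$-stable. Also, the remaining condition $x_\rho(V_i)\subseteq D_i$ involves only the coarse flag $V=V_1\supseteq\cdots\supseteq V_m\supseteq 0$, which has type $(\nu_1,\dots,\nu_m)$ and is therefore untouched when the parts inside a single $\underline{\nu_i}$ are permuted. It follows that to prove $\E_{\underset{\to}{\nu}}=\E_{\underline\lambda}$ it is enough to show: starting from a stable pair of type $\underset{\to}{\nu}$, one can replace the chain of steps strictly between $V_i$ and $V_{i+1}$ by another chain, still shifted down by $x_\sigma$, realizing any prescribed rearrangement of the codimensions $\nu_{i,1},\dots,\nu_{i,a_i}$, keeping $x_\sigma$ and all coarse steps fixed. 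Indeed the resulting flag $\underline V'$ has type $\underline\lambda$ and satisfies $V'_i=V_i$, so $(x_\sigma,\underline V')$ is again a stable pair, witnessing $(x_\sigma,x_\rho)\in\E_{\underline\lambda}$. By the symmetry of ``being a rearrangement'' this yields both inclusions; alternatively, one inclusion already suffices, since $\E_{\underset{\to}{\nu}}$ and $\E_{\underline\lambda}$ are irreducible and closed and a short computation with (\ref{dimension-E}) shows $\dim\tFv$ is unchanged under permuting the parts within each block, while $\dim\E_{\underset{\to}{\nu}}=\dim\tFv$ by Proposition \ref{pi-small}.

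Since $S_{a_i}$ is generated by adjacent transpositions and each swap below again produces a genuine flag of the new type (with the same $x_\sigma$ and the same coarse steps), it suffices to swap two consecutive codimensions within a block. So suppose $U\supseteq G\supseteq R$ with $x_\sigma U\subseteq G$, $x_\sigma G\subseteq R$ and $x_\sigma R\subseteq R$ (the last by the first observation, as $R$ is one of the stable flag steps), $\dim U/G=a$, $\dim G/R=b$; I want $G'$ with $U\supseteq G'\supseteq R$, $x_\sigma U\subseteq G'$, $x_\sigma G'\subseteq R$ and $\dim U/G'=b$. Passing to $\bar U=U/R$ with the induced endomorphism $\bar x_\sigma$, the hypotheses become $\bar x_\sigma\bar U\subseteq\bar G$, $\bar x_\sigma\bar G=0$, $\dim\bar G=b$. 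Hence $\dim\bar x_\sigma\bar U\le b$ and $\dim\ker\bar x_\sigma\ge b$, so in the $(a+b)$-dimensional space $\bar U$ one has $\dim\bar x_\sigma\bar U=(a+b)-\dim\ker\bar x_\sigma\le a\le\dim\ker\bar x_\sigma$; thus there is a subspace $\bar G'$ with $\bar x_\sigma\bar U\subseteq\bar G'\subseteq\ker\bar x_\sigma$ and $\dim\bar G'=a$, and its preimage $G'\subseteq U$ does the job. Iterating over the blocks and over adjacent transpositions produces the required flag of type $\underline\lambda$.

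The only genuine content is the dimension count in the previous paragraph — which, incidentally, uses no nilpotency; the remaining work is the routine check that a within-block swap disturbs neither the coarse flag nor the other stability conditions, including the one across the block boundary $x_\sigma(V_{i,a_i})\subseteq V_{i+1}$. A more structural alternative would be to quote the known description of the image of a generalized (partial-flag) Springer resolution — it is the closure of the nilpotent orbit whose Jordan type is the transpose of the partition underlying the flag type, hence visibly independent of the order of the parts — and to combine it with the observation that the $x_\rho$-condition sees only the coarse flag; I would nonetheless include the elementary argument above to keep the section self-contained.
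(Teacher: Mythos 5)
Your proof is correct, and it shares the paper's overall skeleton: both arguments rest on the observation that the condition $x_{\rho}(V_i)\subseteq D_i$ sees only the coarse steps $V_1\supseteq\cdots\supseteq V_m$, which are unchanged when the parts inside each $\underline{\nu_i}$ are permuted, so the whole lemma reduces to rearranging the flag strictly inside each block while keeping $x_{\sigma}$ and the $V_i$ fixed. Where you diverge is in how that within-block rearrangement is produced. The paper passes to the subquotient $x_i$ of $x_{\sigma}$ on $V_i/V_{i+1}$ and invokes the identity $\mathcal N_{\lambda_i}=\mathcal N_{\underline{\nu_i}}$, i.e.\ the standard fact that the image of the generalized Springer map depends only on the underlying partition and not on the ordering of its parts; this is precisely the ``structural alternative'' you sketch in your closing paragraph. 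Your main argument instead proves the needed existence statement from scratch: reduce to an adjacent transposition, pass to $\bar U=U/R$, note $\bar x_{\sigma}^2=0$, and use rank--nullity ($\dim\mathrm{im}\,\bar x_{\sigma}\le\min(a,b)\le a\le\dim\ker\bar x_{\sigma}$) to sandwich a subspace of dimension $a$ between $\mathrm{im}\,\bar x_{\sigma}$ and $\ker\bar x_{\sigma}$; the check that the swap disturbs no other stability condition, including the one across the block boundary, is also as you say. This buys a self-contained proof at the cost of an induction over transpositions, whereas the paper's route is shorter but leans on the classical description of $\mathcal N_{\underline{\nu}}$ as a nilpotent orbit closure. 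Your fallback via irreducibility, closedness and the invariance of the dimension formula (\ref{dimension-E}) under within-block permutations is also sound, so only one inclusion genuinely needs to be checked.
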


\begin{proof}
Let $\xi_i$ be the dual partition associated to $ \lambda_i$ and $\underline{\nu_i}$ for any $1\leq i\leq m$. 
For any pair $(x, \underline V) $ in $\E_{\underline \lambda}$, let $x_i $ be the subquotient of $x$ to $V_i/V_{i+1}$ and 
$\underline {W_i}$ be the flag in $V_i/V_{i+1}$ of type $\lambda_i$ obtained from $\underline V$ for any $1\leq i\leq m$. 
Then the pair $(x_i, \underline{W_i})$ is a stable pair for any $1\leq i\leq m$. So $x_i \in \mathcal N_{\lambda_i} = \mathcal N_{\underline{\nu_i}}$. 
Thus, there exists a flag, say $\underline{W'_i}$ of type $\underline{\nu_i}$ such that $(x_i, \underline{W_i'})$ is a stable pair for any $1\leq i\leq m$.
The collection $\{\underline {W_i'}|1\leq i\leq m\}$ determines a unique flag $\underline {V'}$  in $V$ of type $\underset{\to}{\nu}$ and it is clear that $(x, \underline{V'})$ is a stable pair. So $x\in \E_{\underset{\to}{\nu}}$.
Hence, $\E_{\underline \lambda}\subseteq \E_{\underset{\to}{\nu}}$. Similarly we have $\E_{\underset{\to}{\nu}}\subseteq \E_{\underline \lambda}$.
\end{proof}

Let  $\underline 1$ be the  composition of partitions such that the $i$-th part is the partition $(1, \cdots, 1)$ of $\nu_i$ consisting of all   $1$'s.  We denote by 
\[
\underline 1\leq \underline \lambda
\]
if  $|\lambda_i|=\nu_i$, the number of parts in the $i$-th partition of $\underline 1$  for any $1\leq i\leq m$. In particular, the $i$-th partition of $\underline 1$ is less than the partition $\lambda_i$ 
for any $1\leq i \leq m$.  
It is clear that 
\[
\cup_{\underline \lambda: \underline 1\lneqq \lambda} \E_{\underline \lambda} \subseteq  \E_{\underline 1} \backslash \E_{\underline 1}^{ (\nu)}.
\]
But this inclusion may be a strict inclusion.  We set
\[
\E_{\underline 1}^{\xi} = \E_{\underline 1} \cap ( \mathcal O_{\xi}\times \Hom(V, D)).
\]
In particular, $\E_{\underline 1}^{(\nu)}$ contains all elements in $\E_{\underline 1}$ such that $x_{\sigma}$ is regular nilpotent.

\begin{prop}
\label{relevant-1}
Let $\E_{\underline 1} = \sqcup X_i$ be a stratification such that 
$X_i \subseteq \E_{\underline 1}^{\xi}$ and the restriction map $\pi_{\underline 1}^{-1}(X_i) \to X_i$ is a locally trivial fibration.  
If  $X_i\subseteq \E_{\underline 1}\backslash \E_{\underline 1}^{(\nu)} $ is a relevant stratum, then 
$X_i \subseteq   \E_{\underline \lambda}$ for some $\underline \lambda$ such that $\underline 1\lneqq \underline \lambda$. 
\end{prop}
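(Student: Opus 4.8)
The plan is to feed the defining dimension identity of a relevant stratum into the fibre-product analysis set up for the second proof of Proposition~\ref{pi-small} --- the stratification $Y_{\underline 1}=\tF_{\underline 1}\times_{\E_{\nu,d,Q}}\tF_{\underline 1}=\sqcup_M Y_M$ together with Corollary~\ref{relevant} --- and then to localise the resulting information at a generic point of the stratum. Write $F_x=\pi_{\underline 1}^{-1}(x)$, let $\beta\colon\tF_{\underline 1}\to\F_{(\nu_1,\dots,\nu_m)}$ send a type-$\underline 1$ stable pair to the underlying flag of the coarse type $(\nu_1,\dots,\nu_m)$ (forgetting the refinement inside each of the $m$ blocks), and recall that $X_i$, being a stratum, is irreducible. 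I will use one elementary observation: for a composition of partitions $\underline\lambda$ with $|\lambda_k|=\nu_k$ for all $k$, one has $x\in\E_{\underline\lambda}$ \emph{if and only if} there is a coarse flag $V_\bullet$ of type $(\nu_1,\dots,\nu_m)$ with $x_\sigma(V_k)\subseteq V_k$, $x_\rho(V_k)\subseteq D_k$ and $\mathrm{type}(x_\sigma|_{V_k/V_{k+1}})\leq\lambda_k^{\perp}$ for all $k$. Indeed, a nilpotent endomorphism can be put into the nilradical of the parabolic stabilising some flag of type $\mu$ precisely when its Jordan type is $\leq\mu^{\perp}$, the type of the Richardson orbit of that parabolic; this lets one pass freely between a stable pair of type $\underline\lambda$ and a coarse flag together with the constraints on its graded pieces. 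In particular $x\in\E_{\underline 1}$ iff such a coarse flag exists with no constraint on $\mathrm{type}(x_\sigma|_{V_k/V_{k+1}})$, while from any such coarse flag one gets $x\in\E_{\underline\lambda}$ with $\lambda_k:=\mathrm{type}(x_\sigma|_{V_k/V_{k+1}})^{\perp}$.

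Step $1$. Since $X_i$ is relevant and $\pi_{\underline 1}^{-1}(X_i)\to X_i$ is a locally trivial fibration, $\pi_{\underline 1}^{-1}(X_i)\times_{X_i}\pi_{\underline 1}^{-1}(X_i)$ has dimension $\dim\tF_{\underline 1}$. Intersecting with the strata $Y_M$ and using $\dim Y_M\leq\dim\tF_{\underline 1}$, with equality only when $M$ is a diagonal block matrix with diagonal blocks of sizes $\nu_1,\dots,\nu_m$ (Corollary~\ref{relevant}), I obtain such an $M$ for which $Y_M\cap\bigl(\pi_{\underline 1}^{-1}(X_i)\times_{X_i}\pi_{\underline 1}^{-1}(X_i)\bigr)$ is dense in the irreducible variety $Y_M$ and dominates $X_i$; hence over a generic $x\in X_i$ its fibre $\{(\underline V,\underline V')\in F_x\times F_x:(\underline V,\underline V')\in Y_M\}$ has dimension $\dim\tF_{\underline 1}-\dim X_i=2\dim F_x$. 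As $M$ is block-diagonal with blocks of sizes $\nu_1,\dots,\nu_m$, the coarse flags of any $(\underline V,\underline V')\in Y_M$ coincide, i.e.\ $\beta(\underline V)=\beta(\underline V')$.

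Steps $2$ and $3$. Fix such a generic $x$. The closed subset $S:=\{(\underline V,\underline V')\in F_x\times F_x:\beta(\underline V)=\beta(\underline V')\}$ of $F_x\times F_x$ contains the fibre of Step~$1$, so $\dim S\geq 2\dim F_x=\dim(F_x\times F_x)$; being closed and of full dimension, $S$ contains a top-dimensional irreducible component $F'\times F''$ of $F_x\times F_x$, with $F',F''$ irreducible components of $F_x$ of dimension $\dim F_x$. Fixing a point of $F''$ shows $\beta$ is constant on $F'$, say $\beta|_{F'}\equiv\widehat V_\bullet$; this $\widehat V_\bullet$ is a coarse flag with $x_\sigma(\widehat V_k)\subseteq\widehat V_k$ and $x_\rho(\widehat V_k)\subseteq D_k$ (being the coarse flag of a point of $F_x=\pi_{\underline 1}^{-1}(x)$), and $F'$ is a $\dim F_x$-dimensional irreducible family of complete $x_\sigma$-stable flags all with coarse flag $\widehat V_\bullet$. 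Put $\lambda_k:=\mathrm{type}(x_\sigma|_{\widehat V_k/\widehat V_{k+1}})^{\perp}$ and $\underline\lambda:=(\lambda_1,\dots,\lambda_m)$; then $|\lambda_k|=\nu_k$, so $\underline 1\leq\underline\lambda$, and $x\in\E_{\underline\lambda}$ by the observation. If $\underline\lambda=\underline 1$, then each $x_\sigma|_{\widehat V_k/\widehat V_{k+1}}$ is a regular nilpotent and has a unique complete stable refinement, hence there is a unique complete $x_\sigma$-stable flag with coarse flag $\widehat V_\bullet$; since $F'$ is an irreducible family of such, $F'$ is a point, $\dim F_x=0$, and by relevance $\dim X_i=\dim\tF_{\underline 1}=\dim\E_{\underline 1}$, i.e.\ $X_i$ is dense in $\E_{\underline 1}$. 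But the image of $X_i\subseteq\E_{\underline 1}^{\xi}$ under $x\mapsto x_\sigma$ lies in $\mathcal O_\xi$ with $\xi\neq(\nu)$ (as $X_i\not\subseteq\E_{\underline 1}^{(\nu)}$), whereas $\E_{\underline 1}$ maps onto the whole nilpotent cone (via $\pi_{\underline 1}$, $\tau_1$ and the Springer resolution $\mu_{\underline 1}$), so $X_i$ is not dense in $\E_{\underline 1}$ --- a contradiction. Thus $\underline 1\lneqq\underline\lambda$. Finally, this holds for every point of a dense open $U\subseteq X_i$, and there are only finitely many compositions of partitions $\underline\lambda$ with $\underline 1\lneqq\underline\lambda$; since each $\E_{\underline\lambda}$ is closed and $U$ is irreducible, $U$ lies in one such $\E_{\underline\lambda}$, and hence so does $X_i=\overline U$.

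The step I expect to be the main obstacle is Step~$1$: extracting from the single dimension identity defining relevance the geometric statement that, over a generic point of the stratum, two generic flags in the fibre have the same coarse flag. This requires the interplay of the stratification of the Steinberg-type variety, the sharp form of Corollary~\ref{relevant} (only the block-diagonal $M$ of the correct block sizes attain full dimension), the irreducibility and vector-bundle structure of each $Y_M$, and a dimension count descending to the generic fibre; one must also allow for the fibres $F_x$ being reducible, which is why the argument isolates a single top-dimensional component $F'$ rather than working with all of $F_x$.
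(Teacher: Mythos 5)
Your proposal is correct and follows essentially the same route as the paper's proof: relevance forces $\pi_{\underline 1}^{-1}(X_i)\times_{X_i}\pi_{\underline 1}^{-1}(X_i)$ to have dimension $\dim\tF_{\underline 1}$, Corollary~\ref{relevant} then singles out a block-diagonal $M$ whose stratum meets this fibre product densely, the resulting coincidence of coarse flags over a generic point of $X_i$ produces $x\in\E_{\underline\lambda}$ with $\underline 1\leq\underline\lambda$, and irreducibility of $X_i$ together with closedness of the finitely many $\E_{\underline\lambda}$ finishes. The only (harmless) divergence is in how $\underline\lambda\neq\underline 1$ is excluded: the paper uses $M\neq 1$ to exhibit a non-regular subquotient and then applies Lemma~\ref{nu-lambda}, whereas you observe that $\underline\lambda=\underline 1$ would force $\dim\pi_{\underline 1}^{-1}(x)=0$, hence $X_i$ dense in $\E_{\underline 1}$, contradicting $X_i\subseteq\E_{\underline 1}^{\xi}$ with $\xi\neq(\nu)$.
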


\begin{proof}
Let $\tilde X_i =\pi_{\underline 1}^{-1}(X_i)$. The assumption is equivalent to say that $Y_i =\tilde X_i \times_{X_i} \tilde X_i $ 
has the same dimension as that of $ \widetilde{\mathcal F}_{\underline 1}$ (see (\ref{Springer})).
Since the restriction of $\pi_{\underline 1}$ is a fibration,  we have a stratification of $\tilde X_i =\sqcup \tilde X_i^{\alpha}$ 
where the $\tilde X_i^{\alpha}$'s  are the irreducible components of $\tilde X_i$.  
This induces a stratification of $Y_i =\sqcup Y_i^{\alpha, \beta}$ where $Y_i^{\alpha, \beta} = \tilde X_i^{\alpha} \times_{X_i} \tilde X_i^{\beta}$ are the irreducible components. Thus the dimension 
of $Y_i^{\alpha, \beta}$ is equal to the dimension of $\widetilde{\mathcal F}_{\underline 1}$.  
By Corollary ~\ref{relevant}, we see that 
$Y_i^{\alpha, \beta} \cap Y_M$, for any $\alpha, \beta$,  is open dense in $Y_i^{\alpha, \beta}$ for some diagonal block matrix $M \neq 1$, 
whose sizes of the diagonal blocks from the top to the bottom are
$\nu_1, \cdots, \nu_m$. 

If $M$ is such a block matrix, then we have  $V_j=V_j'$ for any $1\leq j\leq m$,  for any element  $(x, \underline V, \underline V') \in Y_i^{\alpha, \beta} \cap Y_M$. 
Since $M\neq 1$, there is a diagonal block, say $M_k$, not equal to $1$.  
This implies that the subquotient $x_{\sigma}|_{V_k/V_{k-1}}$ fixes two different complete flags of $V_k/V_{k+1}$, say $\underline W$ and $\underline W'$,   
obtained from  $\underline V$ and $\underline V'$ respectively.
So  the type of $x_{\sigma}|_{V_k/V_{k+1}}$ is not $\nu_k =\dim V_k/V_{k+1}$, as a partition of $\nu_k$. In other words,   $x_{\sigma}|_{V_k/V_{k+1}}$ is not regular nilpotent. 
This induces that there is a position, say $l$,  in the complete flag $\underline W$ such that  $x_{\sigma}|_{V_k/V_{k+1}} (W_l) \subseteq W_{l+2}$.
Hence,  the pair  $(x_{\sigma}|_{V_k/V_{k+1}}, \check{\underline W})$ is a stable pair,
where  $\check{\underline W}$ is  the partial flag obtained from $\underline W$ by deleting $W_l$. 
By Lemma ~\ref{nu-lambda}, this implies that $x\in \cup_{\underline \lambda: \underline 1\lneqq \lambda} \E_{ \underline \lambda}$. 
Subsequently, $\pi^2_{\underline 1}(\cup_{ M} Y_i \cap Y_M) \subseteq \cup_{\underline \lambda: \underline 1\lneqq \lambda} \E_{ \underline \lambda}$, where 
$\pi_{\underline 1}^2$ is the obvious projection and $M$ runs over all diagonal block matrix $M$ of diagonal block size $\nu_1, \cdots, \nu_m$. 

Let $X_i^c$ be the  open subset of all  elements in $X_i$ such that its fiber $(\pi_{\underline 1}^2)^{-1} (X_i^c)$ is disjoint from $\cup_{M} Y_i \cap Y_M$ 
for any  diagonal block matrix $M $ of diagonal block sizes $\nu_1, \cdots, \nu_m$. 
Since $Y_i^{\alpha, \beta} \cap Y_M$ is open dense in $Y_i^{\alpha, \beta}$, this implies that the dimension of $(\pi_{\underline 1}^2)^{-1} (X_i^c)$ is strictly less than the dimension of $Y_i$.
This   forces  $X_i^c=\mbox{\O}$ because $X_i$ is irreducible.  
Therefore,  $X_i  \subseteq \cup_{\underline \lambda: \underline 1\lneqq \lambda} \E_{ \underline \lambda}$. 
Since $X_i$ is irreducible and $\E_{\underline \lambda}$ is closed, we have  $X_i \subseteq  \E_{\underline \lambda}$ for some $\underline \lambda$. 
\end{proof}

\begin{prop}
\label{relevant-2}
Suppose that $X_i \subseteq \E_{\underline 1}^{\xi}$ is a relevant stratum. Then there exists a composition $\underline \lambda$ of partitions such that  $\underline 1\leq \underline \lambda$ and
the associated dual partition of $\underline \lambda$ is $\xi$ and $\overline{X_i} =\E_{\underline \lambda}$ where $\overline{X_i}$ is 
the closure of $X_i$.
\end{prop}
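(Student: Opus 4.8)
The plan is to upgrade Proposition~\ref{relevant-1} to a statement about closures by means of a dimension count, obtaining uniqueness of $\underline\lambda$ from Lemma~\ref{unique}. First dispose of the case $\xi=(\nu)$: then $X_i\subseteq\E_{\underline 1}^{(\nu)}$ by hypothesis, and over $\E_{\underline 1}^{(\nu)}$ the map $\pi_{\underline 1}$ is an isomorphism (as in the proof of Proposition~\ref{pi-small}); since $\E_{\underline 1}^{(\nu)}$ is the isomorphic image of a nonempty open subset of the irreducible variety $\tF_{\underline 1}$, it is irreducible of dimension $\dim\tF_{\underline 1}$ and open dense in $\E_{\underline 1}$, so relevance of $X_i$ forces $\dim X_i=\dim\tF_{\underline 1}$, whence $X_i$ is dense in $\E_{\underline 1}^{(\nu)}$ and $\overline{X_i}=\E_{\underline 1}$; here $\underline\lambda=\underline 1$ works because listing the parts of all the $\lambda_i$ of $\underline 1$ in decreasing order gives $(1^{\nu})$, whose dual is $(\nu)$. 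So assume $\xi\ne(\nu)$. By Proposition~\ref{relevant-1}, $X_i\subseteq\E_{\underline\lambda_0}$ for some $\underline\lambda_0$ with $\underline 1\lneqq\underline\lambda_0$, and since $\overline{X_i}\subseteq\E_{\underline 1}$ always, any $\underline\lambda$ with $\overline{X_i}=\E_{\underline\lambda}$ automatically satisfies $\underline 1\leq\underline\lambda$.

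Next I would pin down $\underline\lambda$. Choose $\underline\lambda$ maximal, in the dominance order on compositions of partitions, among those with $X_i\subseteq\E_{\underline\lambda}$; such a $\underline\lambda$ exists since the set is finite and contains $\underline 1$, and then $\overline{X_i}\subseteq\E_{\underline\lambda}$. I claim that the associated dual partition $\xi(\underline\lambda)$ of $\underline\lambda$ — the dual of the partition obtained by merging the parts of all the $\lambda_i$, equivalently the Jordan type of the generic $x_\sigma$ occurring on $\E_{\underline\lambda}$ — equals $\xi$. Since $X_i\subseteq\E_{\underline 1}^\xi$ is irreducible, its generic point has $x_\sigma\in\mathcal O_\xi$, and since $X_i\subseteq\E_{\underline\lambda}$ this gives $\xi\le\xi(\underline\lambda)$. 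If the inequality were strict, the points of $X_i$ would lie in $\E_{\underline\lambda}\cap(\mathcal O_\xi\times\Hom(V,D))$, hence in the closed complement of the open dense orbit $\E_{\underline\lambda}\cap(\mathcal O_{\xi(\underline\lambda)}\times\Hom(V,D))$ over which the semismall resolution $\pi_{\underline\lambda}\colon\tF_{\underline\lambda}\to\E_{\underline\lambda}$ is an isomorphism. At this point I would prove the analogue of Proposition~\ref{relevant-1} for $\pi_{\underline\lambda}$ in place of $\pi_{\underline 1}$ — repeating that proof with $\underline 1$ replaced by $\underline\lambda$ and invoking Corollary~\ref{relevant} and Lemma~\ref{nu-lambda} for the composition $\underline\lambda$ — to conclude that a relevant stratum of $\pi_{\underline\lambda}$ lying outside its open dense orbit sits inside $\E_{\underline\lambda'}$ for some $\underline\lambda'\gneqq\underline\lambda$; after checking that $X_i$, being relevant for $\pi_{\underline 1}$ and contained in $\E_{\underline\lambda}$, is again relevant for $\pi_{\underline\lambda}$, this would place $X_i$ in a strictly smaller $\E_{\underline\lambda'}$, contradicting maximality. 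Hence $\xi(\underline\lambda)=\xi$, $X_i$ lies in the open dense orbit of $\E_{\underline\lambda}$, and $\overline{X_i}\subseteq\E_{\underline\lambda}$.

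It then remains to prove $\dim X_i=\dim\E_{\underline\lambda}$; given this, $\overline{X_i}=\E_{\underline\lambda}$ by irreducibility, and the composition $\underline\lambda$ is unique because~(\ref{dimension-E}) shows that every $\E_{\underline\lambda'}$ with $\xi(\underline\lambda')=\xi$ has the same dimension $\dim\tF_{\underline 1}-2\sum_{p}\binom{p}{2}$ (the sum over the parts $p$ of $\xi^\perp$), so by Lemma~\ref{unique} no two such varieties are nested and $\overline{X_i}$ determines $\underline\lambda$. For the dimension equality, semismallness of $\pi_{\underline 1}$ and relevance of $X_i$ give $\dim X_i=\dim\tF_{\underline 1}-2\dim\pi_{\underline 1}^{-1}(x)$ for $x\in X_i$, while $\dim\E_{\underline\lambda}=\dim\tF_{\underline\lambda}=\dim\tF_{\underline 1}-2\sum_{p}\binom{p}{2}$; since the Springer fiber of $x_\sigma\in\mathcal O_\xi$ has dimension $\dim\mathcal B_{x_\sigma}=\sum_p\binom{p}{2}$ (again over the parts of $\xi^\perp$), the claim reduces to $\dim\pi_{\underline 1}^{-1}(x)=\dim\mathcal B_{x_\sigma}$ for generic $x$. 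As $\pi_{\underline 1}^{-1}(x)=\{\underline V\in\mathcal B_{x_\sigma}:x_\rho(V_i)\subseteq D_i\text{ for all }i\}$ is a closed subvariety of the equidimensional Springer fiber, this in turn amounts to producing one irreducible component of $\mathcal B_{x_\sigma}$ on which all the linear conditions $x_\rho(V_i)\subseteq D_i$ hold identically, which I would obtain by constructing such a component from the flag data attached to a generic point of $X_i$, in the spirit of the proof of Lemma~\ref{nu-lambda}. The two places where real work is needed are exactly this last step — controlling the fibre dimension of $\pi_{\underline 1}$ along a relevant stratum — and the boundary analysis of $\E_{\underline\lambda}$ invoked in the second paragraph; everything else is formal.
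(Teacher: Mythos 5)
Your skeleton is the same as the paper's: choose $\underline\lambda$ maximal with $X_i\subseteq\E_{\underline\lambda}$, derive a contradiction from the assumption that the dual partition of $\underline\lambda$ is not $\xi$, then match dimensions to conclude $\overline{X_i}=\E_{\underline\lambda}$. But the step you defer with ``after checking that $X_i$ \dots is again relevant for $\pi_{\underline\lambda}$'' is precisely the substance of the paper's proof, and it is not a formal verification. There is no natural map $\pi_{\underline 1}^{-1}(x)\to\pi_{\underline\lambda}^{-1}(x)$: coarsening a complete flag $\underline V$ with $x_{\sigma}(V_l)\subseteq V_{l+1}$ to a flag of type $\underline\lambda$ does not in general yield a flag satisfying the strict stability condition $x_{\sigma}(V_{i,j})\subseteq V_{i,j+1}$ defining $\widetilde{\mathcal F}_{\underline\lambda}$, so relevance does not pass between the two resolutions by fiat. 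The paper's device is to introduce the subvariety $\widetilde X_i(\underline\lambda)\subseteq \pi_{\underline 1}^{-1}(X_i)$ of complete stable flags whose associated $\underline\lambda$-flag is again stable, use the maximality of $\underline\lambda$ to show it has full dimension in $\pi_{\underline 1}^{-1}(X_i)$, and then push the fiber-dimension inequality
\[
2\dim \pi_{\underline 1}^{-1}(x)\ \le\ 2\dim p_{\underline\lambda}^{-1}(y)+2\dim \pi_{\underline\lambda}^{-1}(x)
\]
through the projection $p_{\underline\lambda}\colon \widetilde X_i(\underline\lambda)\to\pi_{\underline\lambda}^{-1}(X_i)$, combined with the identity $\dim\E_{\underline 1}=\dim\E_{\underline\lambda}+2\dim p_{\underline\lambda}^{-1}(y)$ extracted from (\ref{dimension-E}). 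This converts ``$X_i^0$ irrelevant for $\pi_{\underline\lambda}$'' (which follows from maximality, as you note) into ``$X_i$ irrelevant for $\pi_{\underline 1}$'', the desired contradiction. Supplying this comparison is the missing idea, not a routine check.

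Your final step also diverges from the paper and is harder than necessary: to get $\dim X_i=\dim\E_{\underline\lambda}$ you propose to show that the generic fiber $\pi_{\underline 1}^{-1}(x)$ has the dimension of the full Springer fiber of $x_{\sigma}$ by exhibiting a top-dimensional component of $\mathcal B_{x_{\sigma}}$ on which every condition $x_{\rho}(V_i)\subseteq D_i$ holds identically --- and you do not construct such a component. The paper gets the equality for free from the same fiber comparison: once the dual partition of $\underline\lambda$ is $\xi$, the generic $x\in X_i$ lies over the open orbit of $\E_{\underline\lambda}$ where $\pi_{\underline\lambda}$ is an isomorphism, so $2\dim\pi_{\underline 1}^{-1}(x)\le \dim\E_{\underline 1}-\dim\E_{\underline\lambda}$, while relevance of $X_i$ gives $2\dim\pi_{\underline 1}^{-1}(x)=\dim\E_{\underline 1}-\dim X_i$; hence $\dim X_i\ge\dim\E_{\underline\lambda}$ and equality follows. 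Since the $\widetilde X_i(\underline\lambda)$ comparison must be established anyway to close the first gap, I recommend adopting this route for the dimension count as well.
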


\begin{proof}
For two compositions $\underline \lambda$ and $\underline {\lambda'}$ of partitions such that $\underline 1\leq \underline \lambda$ and $\underline 1\leq \underline {\lambda'}$, we denote by 
$\underline \lambda \leq \underline {\lambda'}$ if $\lambda_i \leq \lambda_i'$ for any $1\leq i\leq m$.
By Proposition ~\ref{relevant-1}, we can choose a composition $\underline \lambda$ of partitions
such that  $X_i \subseteq \E_{\underline \lambda}$ and 
$\underline \lambda$ is a maximal one among those compositions of partitions greater than  $\underline 1$.

Assume that the associated dual partition of  $\underline \lambda$  is not $\xi$.
Let $X_i^0$ be the open dense stratum in a stratification of $X_i$ 
such that the restriction of $\pi_{\underline \lambda}$ to $\pi_{\underline \lambda}^{-1} ( X_i^0) \to X_i^0$ is a locally trivial fibration.
If $X_i^0$ is a relevant stratum, then we can show in a similar way as the proof of Proposition ~\ref{relevant-1} that 
$X_i^0 \subseteq \cup_{\underline \lambda \lneqq \underline{\lambda'}} \E_{\underline {\lambda'}}$. 
This contradicts with the choice of $\underline \lambda$. Thus, we see that $X_i^0$ is an irrelevant stratum with respect to $\pi_{\underline \lambda}$.
Hence we have 
\[
2\dim \pi_{\underline \lambda}^{-1}(x) < \dim \E_{\underline \lambda} - \dim X_i^0 =  \dim \E_{\underline \lambda} - \dim X_i, \quad \forall x\in X_i^0.
 \]
 Let $\widetilde X_i(\underline \lambda)$ be the subvariety in $\widetilde X_i :=\pi_{\underline 1}^{-1}(X_i)$ consisting of all stable pairs $(x, \underline V)$ such that 
 the associated flag of type $\underline \lambda$ together with  $x$ is a stable pair. 
 Since  $\underline \lambda$ is the maximal one, we see that $\dim \widetilde X_i(\underline \lambda) =\dim \widetilde X_i$.
 Let $p_{\underline \lambda}: \widetilde X_i(\underline \lambda) \to \pi_{\underline \lambda}^{-1}(X_i)$ be the projection determined by sending the complete flag to the associated flag of type $\underline \lambda$. 
 Then  the fiber dimension of  $p_{\underline \lambda}$ is $\frac{1}{2}\sum_{i=1}^m \nu_i^2-\nu_i$. 
 By combining the above results, we have 
 \begin{align*}
 2\dim \pi_{\underline 1}^{-1} (x)  = 2\dim (p_{\underline \lambda} \pi_{\underline \lambda})^{-1} (x) 
 &\leq 2 \dim p_{\underline \lambda}^{-1} (y) 
 + 2 \dim \pi_{\underline \lambda}^{-1} (x)  \\
& < \sum_{i=1}^m \nu_i^2-\nu_i +  \dim \E_{\underline \lambda} - \dim X_i, 
 \end{align*}
for any $x\in X_i^0$ and $\pi_{\underline \lambda}(y) =x$.  
By the formula (\ref{dimension-E}), $\sum_{i=1}^m \nu_i^2-\nu_i +  \dim \E_{\underline \lambda}=\dim \E_{\underline 1}$. 
By the fact that $\widetilde X_i\to X_i$ is a fibration,  we have 
\[
2\dim \pi_{\underline 1}^{-1} (x) < \dim \E_{\underline 1} -\dim X_i, \quad \forall  x\in X_i.
\] 
This contradicts with our assumption that $X_i$ is a relevant stratum with respect to $\pi_{\underline 1}$. So the dual partition of $\underline \lambda$ is $\xi$.
In this case, we have $\dim X_i =\dim \E_{\underline \lambda}$. 
Otherwise, $2\dim \pi_{\underline 1}^{-1}(x) =\dim \E_{\underline 1} -\dim \E_{\underline \lambda} < \dim \E_{\underline 1} -\dim X_i$ for any $x\in X_i$. A contradiction. Therefore,
 $\overline{X_i} =\E_{\underline \lambda}$. 
\end{proof}

The following corollary follows from  Propositions ~\ref{relevant-1} and ~\ref{relevant-2}.

\begin{cor} 
\label{L-coarse}
When $\underset{\to}{\nu} =\underline 1$,  the complex $L_{\underline 1} $ in (\ref{leading}) has the following decomposition. 
\[
L_{\underline 1} = \IC(\E_{\underline 1}) \oplus \oplus_{\underline \lambda: \underline 1\lneqq \underline \lambda}  \oplus_{\chi} \IC(\E_{\underline \lambda}, \mathcal L_{\underline\lambda, \chi}) \otimes V_{\underline \lambda, \chi},
\]
where $\mathcal L_{\underline \lambda, \chi}$ runs over the set of  irreducible local systems on an open dense smooth subvariety in $\E_{\underline \lambda}$ and $V_{\underline \lambda, \chi}$ is a certain 
finite dimensional vector space over $\mbb C$.
\end{cor}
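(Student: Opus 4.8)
The plan is to combine the general structure theory of semismall maps with the two preceding propositions. First I would invoke the decomposition of $L_{\underline 1}$ attached to a semismall map (as in~\cite{CG97}): since $\pi_{\underline 1}$ is semismall by Proposition~\ref{pi-small} and $\tF_{\underline 1}$ is smooth, I would choose an algebraic stratification $\E_{\underline 1} = \sqcup X_i$ refining $\E_{\underline 1} = \sqcup_\xi \E_{\underline 1}^\xi$ and over which $\pi_{\underline 1}$ restricts to a locally trivial fibration; then one has
\[
L_{\underline 1} = \bigoplus_{X_i \text{ relevant}} \bigoplus_{\phi} \IC(\overline{X_i}, \mathcal L_{\phi}) \otimes V_{\phi},
\]
where $X_i$ is relevant iff $2\dim\pi_{\underline 1}^{-1}(x) = \dim\tF_{\underline 1} - \dim X_i$ for $x\in X_i$, and for such $X_i$ the $\mathcal L_\phi$ range over the irreducible summands of the top nonvanishing direct-image local system of $\pi_{\underline 1}$ along $X_i$, with $V_\phi$ the corresponding multiplicity space.

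Next I would pin down the contributions stratum by stratum. The open dense stratum of $\E_{\underline 1}$ is $\E_{\underline 1}^{(\nu)}$: it is the vector bundle over the regular nilpotent orbit $\mathcal O_{(\nu)}$ cut out by the condition $x_\rho(V_i)\subseteq D_i$ for the unique $x_\sigma$-stable complete flag $\underline V$, it is irreducible and dense, and $\pi_{\underline 1}$ restricts to an isomorphism over it (because $\mu_{\underline 1}$ is an isomorphism over $\mathcal O_{(\nu)}$, cf. the proof of Proposition~\ref{pi-small}). Hence this stratum contributes exactly $\IC(\E_{\underline 1})$ with trivial rank-one local system and multiplicity one, which is precisely~(\ref{leading}). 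Since in a finite stratification the only stratum whose closure is all of $\E_{\underline 1}$ is the open dense one, every other relevant stratum lies in $\E_{\underline 1}\backslash\E_{\underline 1}^{(\nu)}$.

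For such a relevant stratum $X_i$, Proposition~\ref{relevant-1} gives $X_i\subseteq\E_{\underline\lambda}$ for some $\underline\lambda$ with $\underline 1\lneqq\underline\lambda$, and Proposition~\ref{relevant-2} upgrades this to $\overline{X_i}=\E_{\underline\lambda}$, with the dual partition of $\underline\lambda$ being the $\xi$ for which $X_i\subseteq\E_{\underline 1}^\xi$. Therefore the corresponding summand has the form $\IC(\E_{\underline\lambda}, \mathcal L_\phi)\otimes V_\phi$ with $\underline 1\lneqq\underline\lambda$. By Lemma~\ref{unique}, distinct $\underline\lambda$ yield genuinely distinct supports $\E_{\underline\lambda}$, so after grouping all summands sharing a closure, and letting $\chi$ enumerate the finitely many irreducible local systems on a fixed smooth open dense subvariety of $\E_{\underline\lambda}$ (with $V_{\underline\lambda,\chi}$ the, possibly zero, multiplicity space), I obtain exactly the asserted decomposition.

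The main point requiring care is the bookkeeping around the semismall decomposition itself: one must check it is independent of the choice of stratification, so that relevant strata sharing a closure really coalesce into a single $\IC(\E_{\underline\lambda},-)$ summand, and that the open dense stratum is counted exactly once. This is standard once one knows that $\pi_{\underline 1}$ is semismall and a resolution of singularities; the only genuinely new inputs are Propositions~\ref{relevant-1} and~\ref{relevant-2}, which identify the closures of the relevant strata, and everything else is a formal assembly.
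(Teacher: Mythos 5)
Your proposal is correct and follows essentially the same route as the paper, which simply deduces the corollary from the standard decomposition theorem for semismall maps together with Propositions~\ref{relevant-1} and~\ref{relevant-2} (the open dense stratum contributing $\IC(\E_{\underline 1})$ exactly as in~(\ref{leading})). Your more detailed bookkeeping of relevant strata and their closures is exactly the intended argument.
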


\begin{thm}
\label{L-fine}
The  decomposition in Corollary \ref{L-coarse} can be refined as follows.
\[
L_{\underline 1} =  \oplus_{\underline \lambda: \underline 1\leq \underline \lambda} \IC(\E_{\underline \lambda}) \otimes V_{\underline \lambda},
\]
where $V_{\underline \lambda}$ is the simple module of $S_{\nu_1}\times \cdots \times S_{\nu_m}$ parametrized by $\underline \lambda$.
\end{thm}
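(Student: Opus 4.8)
The plan is to refine the decomposition of Corollary~\ref{L-coarse} by running the decomposition theorem for the semismall map $\pi_{\underline 1}$ (Proposition~\ref{pi-small}) and computing, stratum by stratum, the multiplicity spaces together with the monodromy. Recall that for a semismall map with smooth irreducible source, $\pi_!(\mbb C)[\dim]$ is the direct sum over the relevant strata $X$ of $\IC(\overline X,\mathcal L_X)$, where $\mathcal L_X$ is the local system $x\mapsto H^{2d_X}(\pi^{-1}(x))$ with $d_X=\dim\pi^{-1}(x)$ for $x\in X$, and $\pi_1(X,x)$ acts by permuting the $d_X$-dimensional irreducible components of the fibre. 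By Propositions~\ref{relevant-1} and~\ref{relevant-2} the closure of every relevant stratum of $\E_{\underline 1}$ is some $\E_{\underline\lambda}$ with $\underline 1\leq\underline\lambda$, so it remains to prove three things: (i) for \emph{every} $\underline\lambda$ with $\underline 1\leq\underline\lambda$, the open dense stratum of $\E_{\underline\lambda}$ is relevant; (ii) the associated local system $\mathcal L_X$ is trivial; and (iii) $H^{\mathrm{top}}(\pi_{\underline 1}^{-1}(x))\cong V_{\lambda_1}\boxtimes\cdots\boxtimes V_{\lambda_m}$ as a module over $S_{\nu_1}\times\cdots\times S_{\nu_m}$, for $x$ generic in $\E_{\underline\lambda}$. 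Together with Corollary~\ref{L-coarse} and Lemma~\ref{unique} these give the asserted decomposition.

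The core of the argument is a fibrewise reduction to the classical Springer theory of the factors. Fix $\underline\lambda$ with $\underline 1\leq\underline\lambda$, let $\xi$ be its associated dual partition, and take $x=(x_\sigma,x_\rho)$ generic in $\E_{\underline\lambda}$, so $x_\sigma\in\mathcal O_\xi$. Since the generalized Springer resolution of flags of type $\underline\lambda$ is birational onto $\overline{\mathcal O_\xi}$, there is a \emph{unique} flag $\underline W=(W_1\supseteq\cdots\supseteq W_m)$ of type $\underline\lambda$ with $x_\sigma$-stable steps; write $x_\sigma^{(i)}$ for the nilpotent induced on the $\nu_i$-dimensional space $W_i/W_{i+1}$, which for generic $x$ has Jordan type $\lambda_i^\perp$ (the $m=1$ case). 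Because $x\in\E_{\underline\lambda}$ the conditions $x_\rho(W_i)\subseteq D_i$ hold, and since $x_\rho$ constrains only the coarse steps of a flag, the complete flags in $\pi_{\underline 1}^{-1}(x)$ refining $\underline W$ are exactly the tuples of complete $x_\sigma^{(i)}$-stable flags of the $W_i/W_{i+1}$, i.e. they form the product of Springer fibres $\prod_{i=1}^m\mathcal B_{x_\sigma^{(i)}}$, where $\mathcal B_{x_\sigma^{(i)}}$ is the Springer fibre of $x_\sigma^{(i)}$ in $\GL(W_i/W_{i+1})$. A short computation with (\ref{dimension-E}) and the standard dimension formula for nilpotent orbits gives $\dim\prod_i\mathcal B_{x_\sigma^{(i)}}=\tfrac{1}{2}(\dim\tF_{\underline 1}-\dim\E_{\underline\lambda})$. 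On the other hand $x$ lies in some stratum $X$ dense in $\E_{\underline\lambda}$, so $\dim X\geq\dim\E_{\underline\lambda}$, and semismallness forces $\dim\pi_{\underline 1}^{-1}(x)\leq\tfrac{1}{2}(\dim\tF_{\underline 1}-\dim X)\leq\tfrac{1}{2}(\dim\tF_{\underline 1}-\dim\E_{\underline\lambda})$. Comparing the two estimates, all inequalities become equalities: $\overline X=\E_{\underline\lambda}$, the stratum $X$ is relevant, and $\prod_i\mathcal B_{x_\sigma^{(i)}}$ is a union of top-dimensional components of $\pi_{\underline 1}^{-1}(x)$. Invoking the density statement from the proof of Proposition~\ref{relevant-2} — complete flags refining a flag of type $\underline\lambda$ are dense in the generic fibre — one concludes that these are all of the top-dimensional components, hence $H^{\mathrm{top}}(\pi_{\underline 1}^{-1}(x))\cong\bigotimes_{i=1}^m H^{\mathrm{top}}(\mathcal B_{x_\sigma^{(i)}})$.

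It remains to identify this with a simple module and to check that the monodromy is trivial. By the Springer correspondence for $\GL_{\nu_i}$ (in the normalization compatible with the indexing used in Lemma~\ref{unique}), $H^{\mathrm{top}}(\mathcal B_{x_\sigma^{(i)}})\cong V_{\lambda_i}$ as an $S_{\nu_i}$-module, so $H^{\mathrm{top}}(\pi_{\underline 1}^{-1}(x))\cong V_{\lambda_1}\boxtimes\cdots\boxtimes V_{\lambda_m}=:V_{\underline\lambda}$. For the monodromy: the stratum $X$ is, via $x\mapsto x_\sigma$, a vector bundle over $\mathcal O_\xi$, so $\mathcal L_X$ is pulled back from a $\GL_\nu$-equivariant local system on $\mathcal O_\xi$, which corresponds to a representation of the component group $\pi_0(Z_{\GL_\nu}(x_\sigma))$; this group is trivial since centralizers of nilpotents in $\GL_\nu$ are connected, so $\mathcal L_X$ is trivial. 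This establishes (i)--(iii), and the theorem follows.

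The main obstacle is the fibrewise claim of the second paragraph: that for generic $x$ in the relevant stratum the complete flags refining the unique type-$\underline\lambda$ flag $\underline W$ exhaust the top-dimensional components of $\pi_{\underline 1}^{-1}(x)$, so that no component is overlooked. A naive argument only yields the lower bound $\dim\pi_{\underline 1}^{-1}(x)\geq\sum_i\dim\mathcal B_{x_\sigma^{(i)}}$; upgrading it requires the density input of Proposition~\ref{relevant-2} (and, as there, the standing hypothesis $d_i\neq 0$ for $i\leq m-1$, in the spirit of the strict inequality (\ref{estimate}) and Corollary~\ref{relevant}), together with the fact that the induced nilpotents $x_\sigma^{(i)}$ are generically of the expected type $\lambda_i^\perp$. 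An alternative route to the multiplicities — worth recording but not evidently shorter — is to combine the convolution-algebra isomorphism $\End(L_{\underline 1})\cong H_{\mathrm{BM}}^{\mathrm{top}}(Y_{\underline 1})$ with the count $\dim H_{\mathrm{BM}}^{\mathrm{top}}(Y_{\underline 1})=\prod_i\nu_i!$ of Corollary~\ref{irreducible} and the identification of this algebra with $\mbb C[S_{\nu_1}\times\cdots\times S_{\nu_m}]$: Artin--Wedderburn then pins down the multiplicity spaces as the $V_{\underline\lambda}$, but matching the supports with the parameters $\underline\lambda$ again reduces to the fibre analysis above. For this reason I would present the Springer-fibre computation as the main line.
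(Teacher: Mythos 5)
Your route is genuinely different from the paper's. You propose to decompose $\pi_{\underline 1}$ directly as a semismall map by analyzing its generic fibres over each $\E_{\underline \lambda}$, whereas the paper proves Theorem~\ref{L-fine} by passing through the Fourier--Deligne transform $\Phi_{Q,\bar Q}$ (Proposition~\ref{Fourier}) to the opposite orientation, where the corresponding map $\xi_{\underline 1}$ is \emph{small} (Section~\ref{Grothendieck}): there the only relevant stratum carries a $S_{\nu_1}\times\cdots\times S_{\nu_m}$-principal covering, so the decomposition with multiplicities $V_{\underline \lambda}$ is immediate (Proposition~\ref{xi-decomposition}), and the supports are then matched by transporting back via $\Phi_{Q,\bar Q}$, Lemma~\ref{unique}, and the induction on leading terms (\ref{leading}). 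The point of that detour is precisely to avoid the fibre computation you are attempting. Your dimension count $\sum_i \dim \mathcal B_{x_\sigma^{(i)}}=\tfrac12(\dim\tF_{\underline 1}-\dim\E_{\underline\lambda})$ is correct and does establish point (i).

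The gap is the one you flag yourself, and it is genuine: you need that for generic $x\in\E_{\underline\lambda}$ \emph{every} top-dimensional component of $\pi_{\underline 1}^{-1}(x)$ consists of flags refining $\underline W$, i.e.\ that $\prod_i\mathcal B_{x_\sigma^{(i)}}$ exhausts the top of the fibre. Your argument only shows these components \emph{attain} the top dimension; a priori there could be other coarse flags $(V_1,\ldots,V_m)$ with $V_i\neq W_i$, $x_\sigma$-stable and satisfying $x_\rho(V_i)\subseteq D_i$, moving in a positive-dimensional family whose refinements contribute further top-dimensional components. The ``density statement'' you attribute to the proof of Proposition~\ref{relevant-2} is not proved there: that proof asserts the equality $\dim\widetilde X_i(\underline\lambda)=\dim\widetilde X_i$ of total spaces over a stratum, only for the \emph{maximal} $\underline\lambda$, and even that step is asserted from maximality rather than derived; it is not a fibrewise exhaustion statement valid for every $\underline\lambda$ with $\underline 1\leq\underline\lambda$. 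Both your triviality-of-monodromy argument and the identification of the multiplicity space depend on this exhaustion: unless the top-dimensional components of the fibre are known to depend only on $x_\sigma$, the local system $\mathcal L_X$ is not pulled back from $\mathcal O_\xi$, and since $X$ is not a finite union of $\G\times\mbf P_d$-orbits (see the paper's example with the parameter $a\in\mbb C$), you cannot reduce $\mathcal L_X$ to a representation of a component group of a stabilizer. Your fallback via $\dim H_{top}(Y_{\underline 1})=\prod_i\nu_i!$ and Artin--Wedderburn would pin down the total multiplicities once each $V_{\underline\lambda}$ is known to occur in the multiplicity space of $\IC(\E_{\underline\lambda})$ with trivial local system, but as you note that lower bound requires the same fibre analysis. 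Either supply a genuine proof of the exhaustion claim, or follow the paper's Fourier-transform argument.
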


The proof of this theorem will be given in Section ~\ref{Grothendieck}.

Let $\mathcal S_{\nu, \underline d}$ be the set of all isomorphism classes of simple perverse sheaves in $L_{\underset{\to}{\nu}}$ for various $\underset{\to}{\nu}$.  
By abuse of notation, we denote by $\IC (\E_{\underline \lambda}) $ its isomorphism class. 
Let $p(n)$ be the number of partitions of $n$, where $n$ is a positive integer.  We set $p(0)=1$.

\begin{cor}
\label{dimension-S}
$\mathcal S_{\nu, \underline d} =\{ \IC (\E_{\underline \lambda})  \}_{\underline \lambda}$, 
where $\underline \lambda$ runs over all compositions $\underline \lambda = (\lambda_1,\cdots, \lambda_m)$ of partitions such that $|\lambda_1| +\cdots +|\lambda_m|=\nu$. 
In particular,   We have 
\begin{align}
\label{dimension}
\# \mathcal S_{\nu, \underline d} = \sum_{(\nu_1,\cdots, \nu_m)} p(\nu_1) p(\nu_2) \cdots p(\nu_m),
\end{align}
where the sum runs over all tuples $(\nu_1,\cdots, \nu_m)$ of nonnegative integers such that $ \nu_1+\cdots  + \nu_m =\nu$. 
\end{cor}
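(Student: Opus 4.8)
The plan is to read the corollary off from Theorem~\ref{L-fine}, its evident extension to an arbitrary multi-composition, Lemma~\ref{unique}, and a short count. I would first settle the inclusion $\{\IC(\E_{\underline\lambda})\}_{\underline\lambda}\subseteq\mathcal S_{\nu,\underline d}$: given a composition of partitions $\underline\lambda=(\lambda_1,\dots,\lambda_m)$ with $|\lambda_1|+\dots+|\lambda_m|=\nu$, put $\nu_i=|\lambda_i|$ and let $\underline 1$ be the multi-composition attached to $(\nu_1,\dots,\nu_m)$. Since $\lambda_i$ is a partition of $\nu_i$, the number of parts of the $i$-th block of $\underline 1$, we have $\underline 1\leq\underline\lambda$, so Theorem~\ref{L-fine} shows that $\IC(\E_{\underline\lambda})$ occurs in $L_{\underline 1}$ with multiplicity $\dim V_{\underline\lambda}\geq 1$; hence $\IC(\E_{\underline\lambda})\in\mathcal S_{\nu,\underline d}$.

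For the reverse inclusion I would prove that, for \emph{every} multi-composition $\underset{\to}{\nu}$ with associated composition $(\nu_1,\dots,\nu_m)$, every simple summand of $L_{\underset{\to}{\nu}}$ has the form $\IC(\E_{\underline\lambda})$ with $|\lambda_i|=\nu_i$ for all $i$. By the decomposition theorem and the semismallness of $\pi_{\underset{\to}{\nu}}$ (Proposition~\ref{pi-small}), these constituents are the $\IC(\overline X,\mathcal L)$ attached to relevant strata $X$ of $\pi_{\underset{\to}{\nu}}$ and to irreducible local systems $\mathcal L$ occurring in the top cohomology of the fibres. The key point is that Corollary~\ref{relevant} is already stated for an arbitrary $\underset{\to}{\nu}$: the top-dimensional components $Y_M$ of $Y=\tFv\times_{\E_{\nu,d,Q}}\tFv$ are exactly those with $M$ block-diagonal of diagonal block sizes $\nu_1,\dots,\nu_m$. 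Feeding this into the arguments of Propositions~\ref{relevant-1} and~\ref{relevant-2}, essentially verbatim with $\underline 1$ replaced by $\underset{\to}{\nu}$, shows that the closure of any relevant stratum equals some $\E_{\underline\lambda}$ with $|\lambda_i|=\nu_i$, whose associated dual partition is the Jordan type of the generic $x_\sigma$ on that stratum (for the open stratum this recovers $\E_{\underset{\to}{\nu}}$, which equals such an $\E_{\underline\lambda}$ by Lemma~\ref{nu-lambda}). The local systems are trivial by the same type-$A$ mechanism used for $\underline 1$ in Section~\ref{Grothendieck}. Together with the first paragraph and the pairwise non-isomorphisms $\IC(\E_{\underline\lambda})\neq\IC(\E_{\underline{\lambda'}})$ for $\underline\lambda\neq\underline{\lambda'}$ (Lemma~\ref{unique}), this yields $\mathcal S_{\nu,\underline d}=\{\IC(\E_{\underline\lambda})\}_{\underline\lambda}$ with $\underline\lambda$ ranging over the compositions of partitions with $|\lambda_1|+\dots+|\lambda_m|=\nu$.

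The count~(\ref{dimension}) is then bookkeeping: $\#\mathcal S_{\nu,\underline d}$ is the number of tuples $(\lambda_1,\dots,\lambda_m)$ of partitions with $|\lambda_1|+\dots+|\lambda_m|=\nu$. Grouping these by the composition $(|\lambda_1|,\dots,|\lambda_m|)=(\nu_1,\dots,\nu_m)$ and using that for fixed $(\nu_1,\dots,\nu_m)$ there are exactly $p(\nu_1)\cdots p(\nu_m)$ such tuples, with $p(0)=1$ accounting for empty blocks, one sums over all $m$-part compositions $(\nu_1,\dots,\nu_m)$ of $\nu$ to obtain the asserted formula.

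The main obstacle is the reverse inclusion for a general $\underset{\to}{\nu}$: since Theorem~\ref{L-fine} is only stated for $\underline 1$, one has to make sure that the whole chain Corollary~\ref{relevant} $\Rightarrow$ Propositions~\ref{relevant-1},~\ref{relevant-2} $\Rightarrow$ Corollary~\ref{L-coarse}/Theorem~\ref{L-fine} really does go through unchanged for an arbitrary multi-composition, i.e.\ that no new supports and no nontrivial local systems are introduced. The multiplicity spaces will no longer be single simple $S_{\nu_1}\times\cdots\times S_{\nu_m}$-modules, but this is immaterial to the set of simple constituents, which is all the corollary records.
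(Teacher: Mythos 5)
Your conclusion is correct, and your first and third paragraphs coincide with the paper's reasoning: the inclusion $\{\IC(\E_{\underline \lambda})\}_{\underline \lambda}\subseteq \mathcal S_{\nu,\underline d}$ comes from Theorem~\ref{L-fine} applied to the $\underline 1$ attached to $(|\lambda_1|,\dots,|\lambda_m|)$, and the count is the same bookkeeping. Where you diverge is the reverse inclusion, and there your route is genuinely different from, and much heavier than, the paper's. The paper disposes of it in one line: every simple perverse sheaf occurring in $L_{\underset{\to}{\nu}}$ already occurs in $L_{\underline 1}$ for the $\underline 1$ attached to $(\nu_1,\dots,\nu_m)$, so Theorem~\ref{L-fine} as stated suffices. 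This reduction is the standard Borho--MacPherson one; it is most transparent on the $\bar Q$-side, where the flag-coarsening map $\widetilde{\mathcal G}_{\underline 1}\to \tGv$ is a well-defined proper surjection over $\E_{\nu,d,\bar Q}$ (the quasi-stability conditions are preserved under coarsening), so by the decomposition theorem $L_{\underset{\to}{\nu},\bar Q}$ is a shifted direct summand of $L_{\underline 1,\bar Q}$, and Proposition~\ref{Fourier} transports this back to the $Q$-side. Your alternative --- rerunning Corollary~\ref{relevant} and Propositions~\ref{relevant-1}--\ref{relevant-2} with $\underline 1$ replaced by $\underset{\to}{\nu}$ --- should ultimately succeed, but it is not ``essentially verbatim'': the step in Proposition~\ref{relevant-1} converting ``a diagonal block of $M$ is not the identity'' into ``the flag can be coarsened'' uses regular nilpotency of the subquotient, i.e.\ completeness of the flag within each block, and for a general $\underline{\nu_k}$ the analogous statement needs a separate dominance-order argument; likewise, over the open stratum $\xi_{\underset{\to}{\nu}}$ is no longer a principal covering but a quotient of one, so ruling out nontrivial local systems requires decomposing a permutation representation rather than the regular one. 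None of this is fatal, but as written those are the places where your plan still owes a proof, and the paper's one-line reduction makes all of it unnecessary.
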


\begin{proof}
The is because all simple perverse sheaves in $L_{\underset{\to}{\nu}}$ are in $L_{\underline 1}$ for certain $\underline 1$ and all simple perverse sheaves in $L_{\underline 1}$ are of the form $\IC(\E_{\underline \lambda})$. 
\end{proof}

In the case  when $\underset{\to}{\nu} = \underline 1$, we set $Y_{\underline 1} $  for  $Y$ in (\ref{factor-2}). 
In this case, the $\G$-orbits in $\F_{\underline 1}\times \F_{\underline 1}$ are parametrized by the permutation group $S_{\nu}$. 
We regard the group $S_{\nu_1} \times  \cdots \times S_{\nu_m}$ as a subgroup of $S_{\nu}$ via the natural imbedding.
Then the top Borel-Moore homology 
\[
H_{top} (Y_{\underline 1}) = H_{2\dim Y_{\underline 1}} (Y_{\underline 1})
\]
of $Y_{\underline 1}$
has dimension $\# S_{\nu_1} \times \cdots \times S_{\nu_m}$ because 
\[
H_{top} (Y_{\underline 1}) =\mbox{span} \{ [Y_M]  | M \in S_{\nu_1} \times \cdots \times S_{\nu_m}\},
\]
by Corollary ~\ref{irreducible} and ~\cite[2.6]{CG97}.
Moreover
\[
H_{top} (Y_{\underline 1}) \simeq \End (L_{\underline 1}),
\]
by ~\cite[8.9.8]{CG97}, where the endomorphism of  $L_{\underline 1}$ is taken inside the category of perverse sheaves on $\E_{\nu, d, Q}$.
Now, by Theorem ~\ref{L-fine},  we have 
\[
L_{\underline 1} \simeq \oplus_{\underline \lambda: \underline 1\leq \underline \lambda}  \IC (\E_{ \underline \lambda}) \otimes V_{\underline \lambda},
\]
which implies that 
\[
H_{top} (Y_{\underline 1})  \simeq \oplus_{\underline \lambda}  \End(V_{\underline \lambda}) \simeq  \mbb C[S_{\nu_1} \times \cdots \times S_{\nu_m}].
\]
Therefore, we have 

\begin{thm}
\label{Y}
$H_{top}(Y_{\underline 1}) \simeq \mbb C [ S_{\nu_1}\times  \cdots \times S_{\nu_m}]$, as $\mbb C$-algebras and  the set $\{\overline {Y_M}| M\in S_{\nu_1}\times \cdots \times S_{\nu_m}\}$ is a basis for 
$\mbb C [ S_{\nu_1} \times \cdots \times S_{\nu_m}]$.
\end{thm}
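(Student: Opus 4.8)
The plan is to deduce the theorem from the material assembled in the paragraphs immediately preceding the statement, using the standard convolution-algebra formalism of ~\cite{CG97}. First I would record the vector-space part. Since $\pi_{\underline 1}$ is semismall (Proposition ~\ref{pi-small}), we have $\dim Y_{\underline 1} = \dim \tF_{\underline 1}$, so by ~\cite[2.6]{CG97} the top Borel--Moore homology group $H_{top}(Y_{\underline 1})$ is spanned by the fundamental classes of the top-dimensional irreducible components of $Y_{\underline 1}$. By Corollary ~\ref{irreducible} these components are exactly the closures $\overline{Y_M}$ with $M$ a diagonal block matrix with diagonal blocks of sizes $\nu_1,\dots,\nu_m$; identifying such an $M$ with an element of $S_{\nu_1}\times\cdots\times S_{\nu_m}\subseteq S_\nu$, one concludes that $\{[\overline{Y_M}]\mid M\in S_{\nu_1}\times\cdots\times S_{\nu_m}\}$ spans $H_{top}(Y_{\underline 1})$, and since distinct irreducible components give linearly independent fundamental classes, this set is a basis; in particular $\dim H_{top}(Y_{\underline 1}) = \#(S_{\nu_1}\times\cdots\times S_{\nu_m})$.

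Next I would install the algebra structure. The convolution product on $H_{top}(Y_{\underline 1})$ makes it an associative unital $\mathbb C$-algebra, and by ~\cite[8.9.8]{CG97}---whose hypotheses are in force because $\pi_{\underline 1}$ is semismall, hence $L_{\underline 1}$ is perverse---there is an algebra isomorphism $H_{top}(Y_{\underline 1}) \simeq \End(L_{\underline 1})$, the endomorphisms being taken in the abelian category of perverse sheaves on $\E_{\nu,d,Q}$. Then I would feed in Theorem ~\ref{L-fine}: $L_{\underline 1} \simeq \bigoplus_{\underline\lambda:\, \underline 1\leq\underline\lambda}\IC(\E_{\underline\lambda})\otimes V_{\underline\lambda}$, where the $\IC(\E_{\underline\lambda})$ are pairwise non-isomorphic simple perverse sheaves by Lemma ~\ref{unique}, so that $\End(L_{\underline 1})\simeq\bigoplus_{\underline\lambda}\End(V_{\underline\lambda})$ as algebras. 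Since by Theorem ~\ref{L-fine} the $V_{\underline\lambda}$ run over a complete set of pairwise non-isomorphic irreducible $S_{\nu_1}\times\cdots\times S_{\nu_m}$-modules as $\underline\lambda$ ranges over compositions of partitions with $|\lambda_i|=\nu_i$, the right-hand side is precisely the Artin--Wedderburn decomposition of $\mathbb C[S_{\nu_1}\times\cdots\times S_{\nu_m}]$. Composing the isomorphisms gives $H_{top}(Y_{\underline 1})\simeq\mathbb C[S_{\nu_1}\times\cdots\times S_{\nu_m}]$ as $\mathbb C$-algebras, and together with the first step this also yields the basis claim.

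The one point requiring care is the second step: one must check that the convolution product on $H_{top}(Y_{\underline 1})$ corresponds, under ~\cite[8.9.8]{CG97}, to composition of perverse-sheaf endomorphisms, and that the semismallness hypothesis is genuinely verified---but the latter is exactly Proposition ~\ref{pi-small}, and the remaining content is bookkeeping with the decomposition theorem (to obtain the $\IC$-decomposition of $L_{\underline 1}$) together with the Wedderburn theory of the group algebra of a finite group. I therefore expect no serious obstacle beyond correctly matching the two products.
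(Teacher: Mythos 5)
Your proposal is correct and follows essentially the same route as the paper: top-dimensional components of $Y_{\underline 1}$ via Corollary~\ref{irreducible} and ~\cite[2.6]{CG97} for the basis, the isomorphism $H_{top}(Y_{\underline 1})\simeq \End(L_{\underline 1})$ from ~\cite[8.9.8]{CG97}, and then Theorem~\ref{L-fine} together with Wedderburn theory for the group algebra. The only (welcome) extra care in your write-up is the explicit appeal to Lemma~\ref{unique} and to the completeness of the family $\{V_{\underline\lambda}\}$, which the paper leaves implicit.
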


\begin{cor}
Let $\widetilde{\mathcal F}_{\underline 1, x}$ be the fiber of $x$ under the map $\pi_{\underline 1}$. We have 
$H_{top} (\widetilde{\mathcal F}_{\underline 1, x}) \simeq V_{\underline \lambda}$ for any element  $x$ in a relevant stratum of $\pi_{\underline 1}$ which is open dense in $\in \E_{\underline \lambda}$.
\end{cor}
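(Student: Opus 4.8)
The plan is to identify $H_{top}(\widetilde{\mathcal F}_{\underline 1, x})$ with the multiplicity space of the simple summand of $L_{\underline 1}$ supported on $\overline{X_i}$, and then read off the answer from Theorem~\ref{L-fine}. Since $\pi_{\underline 1}$ is semismall by Proposition~\ref{pi-small}, the general formalism for semismall maps in \cite{CG97} applies: over a point $x$ of a \emph{relevant} stratum $X_i$, the top Borel--Moore homology $H_{top}(\widetilde{\mathcal F}_{\underline 1,x})$ of the fibre is naturally a module over $H_{top}(Y_{\underline 1})\cong\End(L_{\underline 1})$ --- the action being convolution with the classes $[\,\overline{Y_M}\,]$ of Theorem~\ref{Y} --- and, as such a module, it is canonically isomorphic to the multiplicity space $V_P$ of the simple summand $P=\IC(\overline{X_i},\mathcal L)$ of $L_{\underline 1}$ whose support is $\overline{X_i}$.

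To carry this out I would spell out the underlying stalk computation. Put $d=\dim\pi_{\underline 1}^{-1}(x)$. By Proposition~\ref{relevant-2} we have $\overline{X_i}=\E_{\underline\lambda}$, so $\dim X_i=\dim\E_{\underline\lambda}$, and relevance of $X_i$ means $2d=\dim\widetilde{\mathcal F}_{\underline 1}-\dim\E_{\underline\lambda}$. Since $\pi_{\underline 1}$ is proper and its fibres are compact, proper base change gives $\mathcal H^{k}(L_{\underline 1})_x=H^{k+\dim\widetilde{\mathcal F}_{\underline 1}}(\pi_{\underline 1}^{-1}(x))$, so in the degree $k=-\dim\E_{\underline\lambda}$ this reads $\mathcal H^{-\dim\E_{\underline\lambda}}(L_{\underline 1})_x=H^{2d}(\pi_{\underline 1}^{-1}(x))$. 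On the other hand, in this same degree the strict support condition for intersection cohomology complexes forces the stalk at $x$ of every summand $\IC(\E_{\underline{\lambda'}},\mathcal L')$ with $\E_{\underline\lambda}\subsetneq\E_{\underline{\lambda'}}$ to vanish, so only the summand supported on $\E_{\underline\lambda}$ contributes, and comparing yields $\mathcal L|_x\otimes V_P\cong H^{2d}(\pi_{\underline 1}^{-1}(x))$ as $\pi_1(X_i,x)$-modules. As $\pi_{\underline 1}^{-1}(x)$ is a compact variety of complex dimension $d$, the universal coefficient theorem identifies $H^{2d}(\pi_{\underline 1}^{-1}(x))$ with the dual of $H_{2d}(\pi_{\underline 1}^{-1}(x))=H_{top}(\widetilde{\mathcal F}_{\underline 1,x})$. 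Now Theorem~\ref{L-fine} says the summand of $L_{\underline 1}$ supported on $\E_{\underline\lambda}$ is precisely $\IC(\E_{\underline\lambda})\otimes V_{\underline\lambda}$; that is, $\mathcal L=\mbb C$ is the trivial local system and $V_P=V_{\underline\lambda}$. Dualising (and using that representations of symmetric groups are self-dual) gives $H_{top}(\widetilde{\mathcal F}_{\underline 1,x})\cong V_{\underline\lambda}$, and the $H_{top}(Y_{\underline 1})\cong\mbb C[S_{\nu_1}\times\cdots\times S_{\nu_m}]$-module structure is that of the simple module $V_{\underline\lambda}$.

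I do not anticipate any genuine difficulty: this is the ``local'' shadow of Theorems~\ref{L-fine} and~\ref{Y}, entirely parallel to the Springer-resolution case in \cite[\S 8.9]{CG97}. The one point meriting care is the support-condition bookkeeping in the second paragraph, namely that no $\IC(\E_{\underline{\lambda'}},\mathcal L')$ with $\E_{\underline\lambda}\subsetneq\E_{\underline{\lambda'}}$ has nonvanishing stalk at $x$ in cohomological degree $-\dim\E_{\underline\lambda}$; this is exactly where the relevance of $X_i$ (equivalently $\dim X_i=\dim\E_{\underline\lambda}$ together with $2d=\dim\widetilde{\mathcal F}_{\underline 1}-\dim\E_{\underline\lambda}$) is used, and is standard.
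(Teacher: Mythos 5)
Your argument is correct and is essentially the paper's: the paper simply cites \cite[8.9.14(b)]{CG97}, and what you have written out is precisely the stalk computation underlying that result for a semismall map over a relevant stratum, combined with Theorem~\ref{L-fine} to identify the multiplicity space as $V_{\underline\lambda}$ with trivial local system. The support-condition bookkeeping and the self-duality remark are both fine, so this is just an unpacking of the citation rather than a different route.
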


This follows from ~\cite[8.9.14(b)]{CG97}.

\subsection{An analogue of Grothendieck's simultaneous resolution}
\label{Grothendieck}

Similar to the representation space $\E_{\nu, d, Q}$ for $Q$, we define the representation space of $\bar Q$ of dimension vector $(\nu, d)$ to be 
\[
\E_{\nu, d, \bar Q} =\End(V) \times \Hom (D, V).
\]
Elements in $\E_{\nu, d, \bar Q}$ will be represented by $\bar x = (x_{\bar \sigma}, x_{\bar \rho})$. 
An element  $(\bar x, \underline V)\in \E_{\nu, d, \bar Q}\times \Fv$ is called $a$ $quasi$-$stable$ $pair$ if
\begin{itemize}
\item $x_{\bar \sigma} ( V_{i, j}) \subseteq V_{i, j}$ for any $i$ and $j$;

\item $ x_{\bar \rho} ( D_i) \subseteq V_{i+1} $ for any $i$.
\end{itemize}
Let $\tGv$ be the variety of all quasi-stable pairs in $ \E_{\nu, d, \bar Q}\times \Fv$. We have the following diagram
\begin{align}
\label{regular}
\begin{CD}
\Fv @<<< \tGv @>\xi_{\underset{\to}{\nu}} >> \E_{\nu, d,\bar Q},
\end{CD}
\end{align}
where the first map is the projection to the second component and the second map is the projection to the first component. 
(Compare with (\ref{factor}).)
We set
\[
L_{\underset{\to}{\nu}, \bar Q} = (\xi_{\underset{\to}{\nu}})_! (\mbb C_{\tGv})[\dim \tGv].
\]
Then we have
 
\begin{prop}
\label{Fourier}
$\Phi_{Q, \bar Q} (L_{\underset{\to}{\nu}} ) = L_{\underset{\to}{\nu}, \bar Q}$, where $\Phi_{Q, \bar Q}$ is the Fourier-Deligne transform from $\D(\E_{\nu, d, Q}) $ to $\D(\E_{\nu, d, \bar Q})$.
\end{prop}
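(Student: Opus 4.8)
The plan is to identify the two quiver representation spaces $\E_{\nu, d, Q}$ and $\E_{\nu, d, \bar Q}$ as dual vector bundles and then recognize $\tFv$ and $\tGv$ as the ``conormal-type'' subvarieties that are naturally exchanged by the Fourier--Deligne transform. Concretely, one fixes $\End(V)$ as the common factor and notes $\Hom(V, D)$ is dual to $\Hom(D, V)$ via the trace pairing $\langle x_\rho, x_{\bar\rho}\rangle = \mathrm{tr}(x_{\bar\rho} x_\rho)$ (equivalently $\mathrm{tr}(x_\rho x_{\bar\rho})$). Thus $\E_{\nu, d, \bar Q}$ is the dual vector bundle of $\E_{\nu, d, Q}$ over the base $\End(V)$ (in fact over a point, but it is convenient to think fibrewise over flag data). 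I would first record the standard fact, as in Lusztig's work on canonical bases via quivers (and as recalled in \cite{CG97}), that for a vector bundle $p\colon E \to S$ with dual $p'\colon E' \to S$, and a smooth closed subvariety $\tilde E \subseteq E$ which is a subbundle over a smooth base, the Fourier--Deligne transform sends $(p|_{\tilde E})_!\,\mbb C_{\tilde E}[\dim \tilde E]$ to $(p'|_{\tilde E^\perp})_!\,\mbb C_{\tilde E^\perp}[\dim \tilde E^\perp]$, where $\tilde E^\perp$ is the annihilator subbundle. This is the engine of the whole argument.

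Next I would set up the base over which both $\tFv$ and $\tGv$ fibre. Recall from \eqref{factor} that $\tFv$ sits over $\tNv$ (the variety of stable pairs $(x_\sigma, \underline V)$ for the $\End(V)$-coordinate alone) via the vector bundle $\tau_1$ forgetting $x_\rho$; the fibre over $(x_\sigma, \underline V)$ is exactly $\{x_\rho \in \Hom(V,D) : x_\rho(V_i)\subseteq D_i \ \forall i\}$, a linear subspace of $\Hom(V, D)$ of dimension $f_1 = \sum_{i\le i'}\nu_i d_{i'}$. On the $\bar Q$ side, $\tGv$ maps to the same $\tNv$: indeed the condition $x_{\bar\sigma}(V_{i,j})\subseteq V_{i,j}$ is weaker than the stability condition $x_\sigma(V_{i,j})\subseteq V_{i,j+1}$ appearing in $\tNv$, so one must be slightly careful --- the correct common base is $\tNv$ with the $\End(V)$-coordinate, and one checks that $\tGv$ is cut out inside $\E_{\nu,d,\bar Q}\times \tNv$-type data by the linear condition $x_{\bar\rho}(D_i)\subseteq V_{i+1}$. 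The key linear-algebra computation is then: under the trace pairing, the annihilator of $\{x_\rho : x_\rho(V_i)\subseteq D_i\}$ inside $\Hom(D, V)$ is precisely $\{x_{\bar\rho} : x_{\bar\rho}(D_i)\subseteq V_{i+1}\}$. This is a direct check: $\mathrm{tr}(x_{\bar\rho} x_\rho) = 0$ for all $x_\rho$ with $x_\rho(V_i)\subseteq D_i$ forces $x_{\bar\rho}$ to kill $D_i/V_{i+1}$-type directions appropriately; a dimension count using $f_1 + f_1^{\vee} = \nu d$ (where $f_1^\vee = \sum_{i\le i'}\nu_{i'} d_i$ or the complementary sum, matching the fibre dimension of $\tGv\to\tNv$) confirms the annihilator is exactly this space and nothing larger.

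With the fibrewise statement in hand, I would apply the vector-bundle Fourier transform relative to the base $\tNv$ (or rather push the whole picture down through $\tNv \to \mathrm{pt}$ and combine with the fact that Fourier transform commutes with proper pushforward in the evident way), obtaining $\Phi_{Q,\bar Q}\bigl((\pi_{\underset{\to}{\nu}})_!\,\mbb C_{\tFv}[\dim\tFv]\bigr) = (\xi_{\underset{\to}{\nu}})_!\,\mbb C_{\tGv}[\dim\tGv]$, i.e.\ $\Phi_{Q,\bar Q}(L_{\underset{\to}{\nu}}) = L_{\underset{\to}{\nu},\bar Q}$, up to checking that the shifts match --- which they do because $\Phi_{Q,\bar Q}$ is normalized to shift by the rank of the bundle and $\dim\tGv = \dim\tFv - f_1 + f_1^\vee$ with $\dim\E_{\nu,d,\bar Q} = \dim\E_{\nu,d,Q}$. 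The main obstacle I anticipate is not any single hard step but bookkeeping: getting the common base exactly right (since the two stability conditions on the $\End(V)$-coordinate do not literally coincide --- one is ``$x_\sigma$ shifts the flag'' and the other is ``$x_{\bar\sigma}$ preserves the flag''), and verifying that the Fourier transform can indeed be applied fibrewise over a base that is itself only locally trivial over a stratification of $\End(V)$ rather than genuinely smooth. I expect this is handled by working over the stratified base and invoking that Fourier--Deligne transform is compatible with restriction to strata and with the proper base change for $\mu_{\underset{\to}{\nu}}$; alternatively, one can avoid the issue entirely by observing that on the $\End(V)$-coordinate nothing is being transformed, so the relevant ``base'' for the Fourier transform is genuinely the fixed vector space $\End(V)$ times the smooth projective $\Fv$, and $\tFv$, $\tGv$ are honest subbundles of the trivial bundles $\Hom(V,D)\times(\End(V)\times\Fv)$ and $\Hom(D,V)\times(\End(V)\times\Fv)$ restricted to the relevant incidence subvarieties.
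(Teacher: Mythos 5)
Your overall strategy (exchange of a subbundle with its annihilator under the Fourier--Deligne transform) is exactly the one the paper invokes --- it simply cites the proof of Lusztig's Proposition 10.2.2 in \cite{L93} --- and your key linear-algebra computation in the framing direction is correct: under the trace pairing, $\{x_\rho : x_\rho(V_i)\subseteq D_i\}$ and $\{x_{\bar\rho} : x_{\bar\rho}(D_i)\subseteq V_{i+1}\}$ pair to zero (their composite is nilpotent with respect to the flag $V_\bullet$) and their dimensions sum to $\nu d$, so they are exact annihilators. However, there is a genuine error in your setup: you treat $\End(V)$ as a fixed base and transform only the $\Hom(V,D)$ factor. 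This cannot work, because in $\bar Q$ the loop $\sigma$ is \emph{also} reversed, and correspondingly the $\End(V)$-conditions defining $\tFv$ and $\tGv$ are different: $\tFv$ imposes the nilradical condition $x_\sigma(V_{i,j})\subseteq V_{i,j+1}$ while $\tGv$ imposes the parabolic condition $x_{\bar\sigma}(V_{i,j})\subseteq V_{i,j}$. These two subspaces of $\End(V)$ are precisely each other's annihilators under the trace form (dimensions $f_2$ and $\nu^2-f_2$, with nilpotent composite), which is the signal that $\Phi_{Q,\bar Q}$ must be the Fourier transform in the $\End(V)$ coordinate as well. Concretely, your claim that ``$\tGv$ maps to the same $\tNv$'' is false: $\tGv$ fibers over the partial Grothendieck resolution (pairs with $x_{\bar\sigma}$ \emph{preserving} the flag), not over $\tNv$; and your fallback that ``on the $\End(V)$-coordinate nothing is being transformed'' is contradicted by supports --- $L_{\underset{\to}{\nu}}$ is supported on nilpotent $x_\sigma$, whereas the support of $L_{\underset{\to}{\nu},\bar Q}$ contains non-nilpotent (e.g.\ semisimple) $x_{\bar\sigma}$, and a transform acting only in the framing direction preserves the support in the $\End(V)$ direction.

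The repair is to take the common base to be the flag variety $\Fv$ alone. Over $\Fv$, the variety $\tFv$ is a subbundle of the trivial bundle $\E_{\nu,d,Q}\times\Fv$ and $\tGv$ is a subbundle of the dual trivial bundle $\E_{\nu,d,\bar Q}\times\Fv$, with fibers over $\underline V$ equal to
$\{(x_\sigma,x_\rho): x_\sigma(V_{i,j})\subseteq V_{i,j+1},\ x_\rho(V_i)\subseteq D_i\}$ and
$\{(x_{\bar\sigma},x_{\bar\rho}): x_{\bar\sigma}(V_{i,j})\subseteq V_{i,j},\ x_{\bar\rho}(D_i)\subseteq V_{i+1}\}$
respectively; these are exact annihilators (dimensions sum to $\nu^2+\nu d$, consistent with $\dim\tFv+\dim\tGv=2\dim\Fv+\dim\E_{\nu,d,Q}$). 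The subbundle--annihilator lemma for the Fourier--Deligne transform relative to $\Fv$, combined with its compatibility with the proper pushforward along the projections $\pi_{\underset{\to}{\nu}}$ and $\xi_{\underset{\to}{\nu}}$, then gives $\Phi_{Q,\bar Q}(L_{\underset{\to}{\nu}})=L_{\underset{\to}{\nu},\bar Q}$ with the shifts matching automatically. With that correction the bookkeeping worries you raise at the end (stratified bases, local triviality over $\End(V)$) disappear, since $\Fv$ is smooth projective and both incidence varieties are honest subbundles over it.
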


The proof of this proposition is exactly the same as the proof of \cite[Proposition 10.2.2]{L93}. We leave it to the reader.
Of course, there is a similar result for other orientations of $\Gamma$.

Note that when $d=0$, the map $\xi_{\underset{\to}{\nu}}$ is the generalized Grothendieck simultaneous resolution.  It is well known that the generalized Grothendieck simultaneous resolution is small. 
One can show that $\xi_{\underset{\to}{\nu}}$ is semismall in a similar manner as  the second proof of Proposition ~\ref{pi-small}.
Moreover, 

\begin{prop}
The morphism  $\xi_{\underset{\to}{\nu}}$ in $(\ref{regular})$  is  small.
 \end{prop}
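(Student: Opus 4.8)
The plan is to mimic the second proof of Proposition~\ref{pi-small}, replacing the role of $\pi_{\underset{\to}{\nu}}$ by $\xi_{\underset{\to}{\nu}}$ and tracking the stratum dimensions carefully enough to detect the gap characterizing smallness. By the same argument that produced $(\ref{global})$, it suffices to stratify $Z':=\tGv\times_{\E_{\nu,d,\bar Q}}\tGv$ via the $\G$-orbits $\mathcal O_M$, $M\in\Theta$, on $\Fv\times\Fv$, and to show that $\dim Z'_M\le\dim\tGv$ with equality \emph{only} when $\mathcal O_M$ is the stratum whose closure maps dominantly onto $\E_{\nu,d,\bar Q}$ (i.e.\ $M$ corresponding to the ``identity'' relative position). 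First I would set up the analogue of the factorization: $Z'_M\to Z''_M\to\mathcal O_M$, where $Z''_M$ is the fiber over $\mathcal O_M$ in $\tNv'\times_{\Nv'}\tNv'$ for the Grothendieck-type resolution $\nu'_{\underset{\to}{\nu}}:\tNv'\to\End(V)$ obtained by forgetting $x_{\bar\rho}$ (here stability is the condition $x_{\bar\sigma}(V_{i,j})\subseteq V_{i,j}$, which is the classical Grothendieck condition), and the first map is a vector bundle coming from the condition $x_{\bar\rho}(D_i)\subseteq V_{i+1}$.

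The key computations are then: (a) the dimension of $Z''_M$, which equals $\dim\Fv\times\Fv = \nu^2-\sum_{(i,j)}\nu_{i,j}^2$ on the \emph{open dense} diagonal-type stratum and drops strictly on all others, since the Grothendieck simultaneous resolution $\nu'_{\underset{\to}{\nu}}$ is small (its fiber product has dimension equal to $\dim\tNv'=\dim\Fv\times\Fv$ only over the big cell); and (b) the fiber dimension $\sum_{i,k\le r} n'_{i,k}d_r$ of $Z'_M\to Z''_M$, where $n'_{i,k}$ is the appropriate dimension of a relative-position subquotient attached to $M$, together with the estimate $\sum_{i,k\le r} n'_{i,k}d_r\le \sum_{i\le r}\nu_i d_r$ analogous to $(\ref{estimate})$, and the observation that this last inequality is an equality precisely when $M$ is of diagonal-block type with blocks of sizes $\nu_1,\dots,\nu_m$. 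Combining, $\dim Z'_M=\dim Z''_M+(\text{fiber})\le(\nu^2-\sum\nu_{i,j}^2)+\sum_{i\le r}\nu_id_r=\dim\tGv$, with equality forcing simultaneously that $M$ is diagonal-block of the right sizes \emph{and} that $Z''_M$ has maximal dimension; the latter, by smallness of $\nu'_{\underset{\to}{\nu}}$, forces $M$ to be the identity relative position, which is the open dense stratum. Hence equality holds only on the open dense stratum, and by the criterion recalled at the end of \S2.1, $\xi_{\underset{\to}{\nu}}$ is small. (That $\xi_{\underset{\to}{\nu}}$ is a morphism to all of $\E_{\nu,d,\bar Q}$, so that the ``open dense'' stratum really is dense in the target, follows because already $\nu'_{\underset{\to}{\nu}}$ is surjective onto $\End(V)$ and the extra $x_{\bar\rho}$-coordinate is unconstrained over the regular locus.)

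The main obstacle I anticipate is verifying the two equality characterizations at once — in particular checking that the inequality $\sum_{i,k\le r} n'_{i,k}d_r\le\sum_{i\le r}\nu_id_r$ is strict for \emph{every} non-diagonal-block $M$, since one must rule out accidental coincidences where the vector-bundle fiber is maximal while $Z''_M$ is not, and conversely. This amounts to a careful bookkeeping of the numbers $n'_{i,k}$ in terms of the entries $m_{(i,j),(k,l)}$ of $M$ and of the flag $\underline D$, exactly parallel to the computation of $n_{i,k}$ in the second proof of Proposition~\ref{pi-small}; I would reduce it to the classical statement that the Grothendieck simultaneous resolution for $\End(V)$ with the flag type $\underset{\to}{\nu}$ is small (equivalently, that $\dim\mathcal O_M+2\dim(\nu'_{\underset{\to}{\nu}})^{-1}(x)<\dim\Fv\times\Fv$ for every non-identity $M$), and then note that adding the $x_{\bar\rho}$-coordinate can only preserve or destroy maximality, never create it away from the big cell, because of $(\ref{estimate})$'s analogue. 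Everything else — the stratification, the vector bundle structure of $Z'_M\to Z''_M$, and the reduction to $(\ref{global})$ — is formally identical to the arguments already carried out for $\pi_{\underset{\to}{\nu}}$, so I would present those steps briskly and concentrate the detail on the dimension estimate.
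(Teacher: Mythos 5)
Your reduction has a genuine gap, and it occurs exactly at the step you flag as the ``key computation.'' The stratification of the fiber product by the $\G$-orbits $\mathcal O_M$ on $\Fv\times\Fv$ detects \emph{semismallness} (via the criterion $\dim M\times_N M=\dim M$ of \S 2.1), but smallness is a condition on the strata of the \emph{target}, and your proposed shortcut --- ``$\dim Z'_M=\dim\tGv$ only for the identity relative position'' --- while it would indeed suffice if true, is false. Already for $d=0$ your claim (a) fails: the fiber of $Z''_M\to\mathcal O_M$ has dimension $\sum m_{(i,j),(k,l)}m_{(i',j'),(k',l')}$ over quadruples with $(i,j)\leq (i',j')$ \emph{and} $(k,l)\leq (k',l')$, and this index set is exactly complementary to the one computing $\dim\mathcal O_M$, so that $\dim Z''_M=\nu^2=\dim\widetilde{\mathfrak g}$ for \emph{every} $M$. (This is the familiar fact that the Grothendieck--Steinberg variety $\widetilde{\mathfrak g}\times_{\mathfrak g}\widetilde{\mathfrak g}$ is equidimensional with a top-dimensional component for each relative position; smallness of $\mu'$ does not mean its self-fiber-product is irreducible --- it means the off-diagonal components lie generically over the regular semisimple locus of the target.) Feeding this into your step (b), one finds $\dim Z'_M=\dim\tGv$ for every block-diagonal $M$ with blocks of sizes $\nu_1,\dots,\nu_m$, precisely as in Corollary~\ref{relevant} for $\pi_{\underset{\to}{\nu}}$, not only for $M=1$. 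So the dichotomy you hoped to exploit never materializes, and your argument proves only semismallness again.

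What is actually needed --- and what the paper's proof supplies --- is to show that each of these top-dimensional components $\overline{Z'_M}$ has its generic point sitting over the open dense stratum of $\E_{\underline 1,\bar Q}$, so that over every non-dense target stratum the fiber product drops in dimension. The paper does this by restricting attention to the locus where $x_{\bar\sigma}$ is regular semisimple and exhibiting an open dense $\E^0_{\underline 1,\bar Q}$ over which $\xi_{\underline 1}$ is an $S_{\nu_1}\times\cdots\times S_{\nu_m}$-principal covering (its fibers are the flags built from eigenvectors compatible with $x_{\bar\rho}(D_i)$), whence zero-dimensional fibers there and the conclusion that the dense stratum is the only relevant one. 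No purely combinatorial bookkeeping of the $\mathcal O_M$-strata can replace this step, because that stratification of $\tGv\times_{\E_{\nu,d,\bar Q}}\tGv$ is not pulled back from a stratification of the target. A small additional remark: the identity relative position indexes the \emph{closed} orbit (the diagonal of $\Fv\times\Fv$), not the open dense one, so the phrase ``the identity relative position, which is the open dense stratum'' conflates the two ends of the Bruhat order.
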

 
 \begin{proof}
We assume that $\underset{\to}{\nu} =\underline 1$. Let 
\[
\mu': \widetilde {\mathfrak g} \to \End(V) 
\]
be the Grothendieck's simultaneous resolution of type $A$, where $\widetilde {\mathfrak g} $ is the variety of quasi-stable pairs $(\underline V, x_{\bar \sigma})$ (with $x_{\bar\rho}=0$)  in $\F_{\underline 1}\times \End(V)$ such that $x_{\bar \sigma}(V_{i,j}) \subseteq V_{i,j}$ for any $i$ and $j$. It is well-known that $\mu'$ is a small resolution with the only relevant strata $\End(V)^{rs}$ consisting of all regular semisimple elements in $\End(V)$.

Let $\E_{\underline 1, \bar Q}$ be the image of the map $\xi_{\underline 1}$. Let $\E_{\underline 1, \bar Q}^{rs}$ be the open subvariety in $\E_{\underline 1, \bar Q}$ consisting of all elements 
whose $\bar\sigma$ components are in $\End(V)^{re}$. By arguing in a similar way as the second proof of Proposition ~\ref{pi-small}, we see that  relevant strata in $\E_{\underline 1}$ with respect to $\xi_{\underline 1}$
are contained in $\E_{\underline 1, \bar Q}^{rs}$.

For any regular semisimple element $x_{\bar \sigma}$, let us fix a basis $\{ u_l| 1\leq l \leq \nu\}$ of $V$ consisting of eigenvectors of $x_{\bar \sigma}$.
Let $\underline V$ be the complete flag whose $l$-th step $V_l$ is the vector subspace spanned by the vectors $u_l, \cdots, u_{\nu}$. 
For any $s\in S_{\nu}$, we denoted by $s\underline V$ be the flag obtained  from $\underline V$ whose $l$-th step is the subspace spanned by the vectors $u_{s(l)}, \cdots, u_{s(\nu)}$.
It is clear that the fiber of $x_{\bar \sigma}$ under $\mu'$ is $\{(sV, x_{\bar \sigma})|s\in S_{\nu}\}$.

Let $\E_{\underline 1, \bar Q}^0$ be the open dense  subvariety  of $\E_{\underline 1,\bar Q}^{re}$ 
consisting of all elements $\bar x=(x_{\bar \sigma},x_{\bar \rho})$ such that   
\[
\# \{u_l| 1\leq l\leq \nu\} \cap x_{\bar \rho} (D_i)  = \nu_{i+1}+\cdots + \nu_m, \quad \forall 1\leq i\leq m, 
\]
where $\{u_l| 1\leq l \leq \nu\}$ is  a basis of $V$ consisting of eigenvectors of 
$x_{\bar \sigma}$ and $\nu_i$ is the part of $\nu$ in the definition of the multi-composition  $\underline 1$. Note that the above condition is independent of the choice of the eigenvectors of $x_{\bar \sigma}$ because
$x_{\bar \rho}(D_i)$ is  a vector subspace. The fact that $\E_{\underline 1,\bar Q}^0$ is open can be proved in the following way. 
We consider the projection $\E_{\underline 1, \bar Q}^0 \to \End(V)^{rs}$. 
We observe that the dimension of $\E^0_{\underline 1, \bar Q}$ is equal to that of  $\E_{\underline 1,\bar Q}^{re}$. 

Since the fiber of $\pi_{\underline 1}$ at any point in  $\E_{\underline 1,\bar Q}^0$ has dimension zero, we see that the relevant strata can only be found in $\E_{\underline 1, \bar Q}^0$.
Consider the restriction $\xi_{\underline 1}^{-1}(\E_{\underline 1, \bar Q}^0 ) \to \E_{\underline 1, \bar Q}^0$  of $\xi_{\underline 1}$ to $\xi_{\underline 1}^{-1}(\E_{\underline 1, \bar Q}^0)$. 
It is a $S_{\nu_1}\times \cdots \times S_{\nu_m}$-principal covering. So $\E_{\underline 1, \bar Q}^0$ is  smooth, hence it is the only relevant stratum for $\xi_{\underline 1}$. 
So we have proved that $\xi_{\underline 1}$ is small.

The statement that $\xi_{\underset{\to}{\nu}}$ is small in general follows from the fact that $\xi_{\underline 1}$ is small  because $\xi_{\underline 1} $ factors through $\xi_{\underset{\to}{\nu}}$.
 \end{proof}
 
 From the above proof, we have the following diagram
 \[
 \begin{CD}
 \xi_{\underline 1}^{-1}(\E_{\underline 1, \bar Q}^0 ) @>\xi_{\underline 1}^0>> \E_{\underline 1, \bar Q}^0\\
 @VVV @VVV \\
 \widetilde{\mathcal G}_{\underline 1} @>\xi_{\underline 1}>> \E_{\underline 1, \bar Q},
 \end{CD}
 \]
 where the vertical maps are open embeddings. Since the morphism $\xi_{\underline 1}^0$ is a $S_{\nu_1}\times \cdots \times S_{\nu_m}$-principal covering, 
 we see (\cite{C98}, ~\cite{KW01}) that 
 $S_{\nu_1}\times \cdots \times S_{\nu_m}$ acts on the local system $( \xi_{\underline 1 })_! (\mbb C_{ \xi_{\underline 1}^{-1}(\E_{\underline 1, \bar Q}^0 )}) $ on $\E_{\underline 1, \bar Q}^0$, and, moreover,
 \[
( \xi^0_{\underline 1 })_! \left (\mbb C_{ \xi_{\underline 1}^{-1}(\E_{\underline 1, \bar Q}^0 )} \right ) \simeq \oplus_{\underline \lambda: \underline 1\leq \underline \lambda}   \mathcal L_{\underline \lambda} \otimes V_{\underline \lambda},
 \]
 where  $V_{\underline \lambda}$ is the simple $S_{\nu_1}\times \cdots \times S_{\nu_m}$-module parametrized by $\underline \lambda$ and 
 $\mathcal L_{\underline \lambda}$ is the irreducible local system on $\E_{\underline 1, \bar Q}^0 $ determined by $V_{\underline \lambda}$.
 Since $\xi_{\underline 1}$ is small, by perverse continuity, we have

 \begin{prop}
 \label{xi-decomposition}
 $L_{\underline 1, \bar Q} \simeq \oplus_{\underline \lambda: \underline 1\leq \underline \lambda}  \IC(\E_{\underline 1, \bar Q},  \mathcal L_{\underline \lambda}) \otimes V_{\underline \lambda}$.
 \end{prop}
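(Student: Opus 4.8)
The plan is to obtain the decomposition directly from the smallness of $\xi_{\underline 1}$ (established above) together with the structure of $\xi_{\underline 1}$ over its unique relevant stratum, using the characterization of intersection cohomology complexes as intermediate extensions.

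First I would record that $\widetilde{\mathcal G}_{\underline 1}$ is smooth and irreducible — this is clear from its construction as a tower over the flag variety $\F_{\underline 1}$ parallel to the factorization (\ref{factor}) — and that $\dim \widetilde{\mathcal G}_{\underline 1} = \dim \E_{\underline 1, \bar Q}$, since $\xi_{\underline 1}$ is small, hence generically finite. Consequently $L_{\underline 1, \bar Q} = (\xi_{\underline 1})_!(\mbb C_{\widetilde{\mathcal G}_{\underline 1}})[\dim \widetilde{\mathcal G}_{\underline 1}]$ is, by properness and smallness together with the decomposition theorem, a semisimple perverse sheaf on $\E_{\underline 1, \bar Q}$ with no direct summand supported on a proper closed subvariety; equivalently, by perverse continuity, $L_{\underline 1, \bar Q}$ is canonically isomorphic to $\IC(\E_{\underline 1, \bar Q}, \mathcal L)$, where $\mathcal L$ is the restriction of $L_{\underline 1, \bar Q}[-\dim \E_{\underline 1, \bar Q}]$ to any smooth open dense subvariety over which $\xi_{\underline 1}$ is a topological covering.

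Next I would take this subvariety to be $\E_{\underline 1, \bar Q}^0$, the unique relevant stratum of $\xi_{\underline 1}$ identified above, over which $\xi_{\underline 1}$ restricts to the $S_{\nu_1}\times\cdots\times S_{\nu_m}$-principal covering $\xi_{\underline 1}^0$. Proper base change gives $L_{\underline 1, \bar Q}|_{\E_{\underline 1, \bar Q}^0} = (\xi_{\underline 1}^0)_!(\mbb C)[\dim \E_{\underline 1, \bar Q}]$, and the Galois-theoretic decomposition of the direct image along a principal covering — recorded just before the statement, following \cite{C98} and \cite{KW01} — gives $(\xi_{\underline 1}^0)_!(\mbb C) \simeq \oplus_{\underline 1\leq \underline\lambda} \mathcal L_{\underline\lambda}\otimes V_{\underline\lambda}$, with $\mathcal L_{\underline\lambda}$ the irreducible local system attached to $V_{\underline\lambda}$. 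Since the intermediate extension $\IC(\E_{\underline 1, \bar Q}, -)$ is an additive functor, applying it to this direct sum yields
\[
L_{\underline 1, \bar Q} \simeq \IC\!\left(\E_{\underline 1, \bar Q},\ \oplus_{\underline 1\leq\underline\lambda}\mathcal L_{\underline\lambda}\otimes V_{\underline\lambda}\right) \simeq \oplus_{\underline 1\leq\underline\lambda}\IC(\E_{\underline 1, \bar Q},\mathcal L_{\underline\lambda})\otimes V_{\underline\lambda},
\]
as claimed.

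I do not anticipate a genuine obstacle: the geometric content lies entirely in the smallness of $\xi_{\underline 1}$, which has already been proved, and the remaining steps are formal. The one point deserving a word of care is that the $\mathcal L_{\underline\lambda}$ are genuinely irreducible local systems (so that the displayed formula is the isotypic decomposition of $L_{\underline 1, \bar Q}$ into pairwise non-isomorphic simple perverse sheaves, with multiplicities $\dim V_{\underline\lambda}$); this reduces to the connectedness of the covering $\xi_{\underline 1}^0$, i.e. the surjectivity of the monodromy homomorphism $\pi_1(\E_{\underline 1, \bar Q}^0)\to S_{\nu_1}\times\cdots\times S_{\nu_m}$, which holds because the total space $\xi_{\underline 1}^{-1}(\E_{\underline 1, \bar Q}^0)$ is open dense in the irreducible variety $\widetilde{\mathcal G}_{\underline 1}$, hence irreducible. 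With surjectivity in hand, the regular representation of $S_{\nu_1}\times\cdots\times S_{\nu_m}$ pulls back to $\oplus_{\underline\lambda}\mathcal L_{\underline\lambda}^{\oplus\dim V_{\underline\lambda}}$, matching the asserted multiplicities.
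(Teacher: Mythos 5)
Your proposal is correct and follows essentially the same route as the paper: smallness of $\xi_{\underline 1}$ plus perverse continuity identifies $L_{\underline 1, \bar Q}$ with the intermediate extension of its restriction to the open dense relevant stratum $\E_{\underline 1, \bar Q}^0$, where the principal-covering decomposition $(\xi^0_{\underline 1})_!(\mbb C)\simeq \oplus_{\underline\lambda}\mathcal L_{\underline\lambda}\otimes V_{\underline\lambda}$ gives the result. Your added remarks (irreducibility of the total space, hence connectedness of the covering and surjectivity of the monodromy) are correct elaborations of what the paper leaves to the citations of \cite{C98} and \cite{KW01}.
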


We are now ready to prove Theorem ~\ref{L-fine}.  By Propositions ~\ref{Fourier} and ~\ref{xi-decomposition}, we see that the number of non-isomorphic simple perverse sheaves appearing in 
$L_{\underline 1}$ is the same as the number of $\underline \lambda$ such that $\underline 1\leq \underline \lambda$. 
It is clear that $\IC(\E_{\underline \lambda})$ appears in $L_{\underline 1}$. By Lemma  ~\ref{unique}, 
we see that the set $\{ \IC(\E_{\underline \lambda})| \underline 1\leq \underline \lambda\}$  contains all simple perverse sheaves appeared in $L_{\underline 1}$. 
To this end, we see that in order to show Theorem ~\ref{L-fine},  it is enough to show that 
\[
\Phi_{Q, \bar Q} ( \IC(\E_{\underline \lambda}))=
\IC(\E_{\underline 1, \bar Q}, \mathcal L_{\underline \lambda}) , \quad \forall \underline 1\leq \underline \lambda.
\]
This follows from Proposition ~\ref{Fourier} and by induction because 
$\IC(\E_{\underline \lambda})$ and  $\IC(\E_{\underline 1, \bar Q}, \mathcal L_{\underline \lambda})$ 
are the leading terms (see (\ref{leading})) of the complexes $L_{\underline \lambda}$ and $L_{\underline \lambda, \bar Q}$, respectively. 
This finishes the proof of Theorem ~\ref{L-fine}.

\subsection{Singular support}  We set
\[
\E_{\nu, d} =\E_{\nu, d, Q} \oplus \E_{\nu, d, \bar Q},
\]
where $\E_{\nu, d, Q} $ and $\E_{\nu, d, \bar Q}$ are defined in Section ~\ref{Springer} and \ref{Grothendieck}, respectively.

Given any subspace $U$ in $V$ and $(x_{\sigma}, x_{\bar \sigma})$ in $\End(V) \oplus \End(V)$, we denote by $\overline U$  the smallest 
$(x_{\sigma}, x_{\bar \sigma})$-invariant subspace containing $U$ and $\underline U$ the largest $(x_{\sigma}, x_{\bar \sigma})$-invariant subspace contained in $U$.
(Unfortunately, this notation clashes with the notation of flags. But it is clear from the setting.)

Let $\Pi_{\nu, \underline d}$ be the locally closed subvariety of $\E_{\nu, d}$ consisting of all elements $(x_{\sigma}, x_{\rho}, x_{\bar \sigma}, x_{\bar \rho})$ subject to the following conditions:
\begin{align*}
&x_{\sigma}x_{\bar\sigma} -x_{\bar \sigma}x_{\sigma} -x_{\bar\rho} x_{\rho} =0, \\
&\mbox{$x_{\sigma}$ is nilpotent,} \\
&\overline{x_{\bar \rho}(D_i)} \subseteq \underline { x_{\rho}^{-1} ( D_{i+1})}, \quad \forall 1\leq i \leq m,
\end{align*}
where we set $\underline { x_{\rho}^{-1} ( D_{m+1})}=0$. 

\begin{prop} 
\label{singular}
We have the following results.
\begin{itemize}
\item[(a)] The variety $\Pi_{\nu, \underline d}$ is equidimensional, i.e., all irreducible components are of the same dimension.
\item[(b)] The dimension of $\Pi_{\nu, \underline d}$ is $\nu^2+ \nu d$.
\item[(c) ] The irreducible components $T_{\underline \lambda}$  in $\Pi_{\nu, \underline d} $    are parametrized by  the $m$-partitions, where $T_{\underline \lambda}$ is the irreducible component whose projection to $\E_{\nu, d, Q}$ is $\E_{\underline \lambda}$.
\item [(d) ] The singular support  of $\IC(\E_{\underline \lambda})$ is contained in   $
          T_{\underline \lambda} \cup \cup_{\underline \lambda\lneqq \underline \mu } T_{\underline \mu}$ for any $m$-partitions $\underline \lambda$ of $\nu$.
\end{itemize}
\end{prop}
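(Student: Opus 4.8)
\textbf{Proof proposal for Proposition \ref{singular}.}

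The plan is to treat the four statements essentially in the order given, since each feeds into the next, and to lean heavily on the two ``resolution'' pictures already established, namely $\pi_{\underset{\to}{\nu}}$ (Springer type, Proposition \ref{pi-small}) and $\xi_{\underset{\to}{\nu}}$ (Grothendieck type, Section \ref{Grothendieck}), together with the Fourier--Deligne compatibility in Proposition \ref{Fourier}. The key observation is that $\Pi_{\nu,\underline d}$ is designed to be the union of conormal varieties of the strata of $\E_{\nu,d,Q}$ relevant to $L_{\underline 1}$: the equation $x_\sigma x_{\bar\sigma}-x_{\bar\sigma}x_\sigma - x_{\bar\rho}x_\rho = 0$ is the moment-map equation for the $\GL$-action, and the flag-type conditions $\overline{x_{\bar\rho}(D_i)}\subseteq\underline{x_\rho^{-1}(D_{i+1})}$ are exactly what cuts out the conormals to the strata $\E_{\underline\lambda}$. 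So the strategy is: first build $\Pi_{\nu,\underline d}$ as a union of the conormal bundles $T^*_{\E_{\underline\lambda}}\E_{\nu,d,Q}$ (intersected with the doubled-quiver equation), then read off (a), (b), (c) from the geometry of these conormals, and finally deduce (d) from the general fact that the singular support of an $\IC$ sheaf is a union of conormals to strata in its support.

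For (b) and the ``each component has dimension $\nu^2+\nu d$'' half of (a): I would compute $\dim\E_{\nu,d,Q} = \nu^2 + \nu d$ directly from the definition $\E_{\nu,d,Q}=\End(V)\times\Hom(V,D)$, so that the total cotangent space $\E_{\nu,d}=\E_{\nu,d,Q}\oplus\E_{\nu,d,\bar Q}$ has dimension $2(\nu^2+\nu d)$, and a Lagrangian subvariety therein has dimension $\nu^2+\nu d$. Then for each $m$-partition $\underline\lambda$ with $\underline 1\leq\underline\lambda$ (equivalently each composition of partitions), I would identify the closed subvariety $T_{\underline\lambda}\subseteq\Pi_{\nu,\underline d}$ as the closure of the conormal to the relevant smooth locally closed stratum inside $\E_{\underline\lambda}$ supplied by Proposition \ref{relevant-2}; being a conormal variety it is irreducible of dimension exactly $\dim\E_{\nu,d,Q}=\nu^2+\nu d$. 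Showing that $\Pi_{\nu,\underline d}$ is \emph{exactly} the union $\bigcup_{\underline\lambda}T_{\underline\lambda}$ — rather than merely containing it — is the step requiring real work: one inclusion is a direct check that points of $T_{\underline\lambda}$ satisfy the three displayed conditions (the flag condition forces the inclusion of invariant subspaces, the commutator condition is the Lagrangian/moment-map relation), while the reverse inclusion amounts to showing that any $(x_\sigma,x_\rho,x_{\bar\sigma},x_{\bar\rho})$ satisfying the conditions, with $x_\sigma$ nilpotent, lies in the conormal to some $\E_{\underline\lambda}$; here I would run a stratification argument on the nilpotent type of $x_\sigma$ plus the positions of the subspaces $\underline{x_\rho^{-1}(D_{i+1})}$, mimicking the dimension bookkeeping in the second proof of Proposition \ref{pi-small} and in Proposition \ref{relevant-1}. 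This bound, together with equidimensionality of each piece, gives (a) and (b) and the parametrization (c); the fact that distinct $\underline\lambda$ give distinct $T_{\underline\lambda}$ follows from Lemma \ref{unique} applied to their projections $\E_{\underline\lambda}$ to $\E_{\nu,d,Q}$.

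For (d): by the general theory of singular supports (Kashiwara--Schapira, or as used in \cite{L93}), the singular support of any simple perverse sheaf is a closed conic Lagrangian subvariety of the cotangent bundle, and for $\IC(\E_{\underline\lambda})$ it is a union of conormal varieties $T^*_{S}\E_{\nu,d,Q}$ where $S$ ranges over strata of a Whitney stratification of $\overline{\E_{\underline\lambda}}=\E_{\underline\lambda}$ on which the $\IC$-cohomology sheaves are locally constant. The open dense stratum of $\E_{\underline\lambda}$ contributes precisely $T_{\underline\lambda}$ by construction, and every other stratum is contained in the boundary $\E_{\underline\lambda}\setminus(\text{open locus})$, which by Corollary \ref{L-coarse} and the support estimates of Propositions \ref{relevant-1}, \ref{relevant-2} is covered by the smaller varieties $\E_{\underline\mu}$ with $\underline\lambda\lneqq\underline\mu$; hence its conormal lies inside $\bigcup_{\underline\lambda\lneqq\underline\mu}T_{\underline\mu}$. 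Assembling these gives the claimed inclusion $\mrm{SS}(\IC(\E_{\underline\lambda}))\subseteq T_{\underline\lambda}\cup\bigcup_{\underline\lambda\lneqq\underline\mu}T_{\underline\mu}$. The main obstacle I anticipate is the reverse inclusion in the identification $\Pi_{\nu,\underline d}=\bigcup_{\underline\lambda}T_{\underline\lambda}$: controlling, for an arbitrary solution of the moment-map equation with the nested-subspace constraint, exactly which conormal it lands in requires a careful induction on $m$ and on the nilpotent type, and is where one must be most careful that no extra ``spurious'' components of the correct dimension $\nu^2+\nu d$ sneak in.
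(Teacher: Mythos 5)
Your strategy for (a)--(c) differs from the paper's. You propose to realize $\Pi_{\nu,\underline d}$ as a union of closures of conormal varieties to the relevant strata of $\E_{\nu,d,Q}$ and to get equidimensionality from the Lagrangian property of conormals; the paper instead stratifies $\Pi_{\nu,\underline d}=\sqcup_{\underline\nu}\Pi_{\nu,\underline d;\underline\nu}$ by the type of the flag $\{\underline{x_{\rho}^{-1}(D_i)}\}$, fibers each stratum over $\F_{\underline\nu}$, and exhibits each fiber as a vector bundle over a product $\mbf L_{V/V_2,D/D_2}\times\mbf L^s_{V_2/V_3,D_2/D_3}\times\cdots\times\mbf L^s_{V_m,D_m}$, thereby reducing everything to the known equidimensionality of the commuting nilpotent variety $\Lambda_{\nu}$ and an explicit fiber-dimension count $\sum_{i<j}\nu_i\nu_j+\nu d-\sum_i\nu_i d_i$. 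Your acknowledged ``hard step'' (that no spurious components of dimension $\nu^2+\nu d$ occur) is exactly what this mechanism delivers, and you only gesture at it; so for (a)--(c) your outline identifies the right difficulty and even the right stratification data, but does not actually close the argument, whereas the paper's reduction to $\Lambda_{\nu}$ does (modulo the vector-bundle verification it leaves to the reader).

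For (d) there is a genuine gap. You argue that every non-open stratum of $\E_{\underline\lambda}$ is contained in some $\E_{\underline\mu}$ with $\underline\lambda\lneqq\underline\mu$, ``hence its conormal lies inside $\bigcup T_{\underline\mu}$.'' That inference is false in general: if $S'$ is a stratum strictly contained in $\E_{\underline\mu}$, the closure of $T^*_{S'}\E_{\nu,d,Q}$ is a Lagrangian of the correct dimension but is \emph{not} contained in the closure of the conormal to the open stratum of $\E_{\underline\mu}$ (consider a point stratum, whose conormal is an entire cotangent fiber). The general theory only tells you $\mrm{SS}(\IC(\E_{\underline\lambda}))$ is a union of conormals to strata of a Whitney stratification; to conclude that the only conormals occurring are among the $T_{\underline\mu}$ you need an a priori bound such as $\mrm{SS}(\IC(\E_{\underline\lambda}))\subseteq \mrm{SS}(L_{\underline\lambda})\subseteq\Pi_{\nu,\underline d}$ (the second inclusion coming from the standard estimate of the singular support of a proper pushforward along $\pi_{\underline\lambda}$), after which the component description in (a)--(c) pins the answer down. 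This is essentially what the paper's one-line argument -- that $\IC(\E_{\underline\lambda})$ and $T_{\underline\lambda}$ are the respective ``leading terms'' of $L_{\underline\lambda}$ and of its singular support -- is invoking; without some version of that containment your deduction of (d) does not go through.
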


The rest of this section is devoted to prove this theorem. 

Let $\Lambda_{\nu}$ be the variety of commuting pairs $(x_{\sigma}, x_{\bar\sigma})$ and $x_{\sigma}$ is nilpotent. 
It is well-known (see ~\cite{Li10}) that $\Lambda_{\nu}$ is equidimensional 
of dimension $\nu^2$ and whose irreducible components  are parametrized by the set of partitions of $\nu$. We write
$\Lambda_{\nu, \lambda}$ for the irreducible component such that its projection to the $x_{\sigma}$-component 
is the closure of the $\G$-orbit $\mathcal O_{\lambda^{\perp}}$.

We consider the subvariety $\mbf L_{\nu, d}$ of $\E_{\nu, d}$ consisting of all quadruples $(x_{\sigma}, x_{\rho}, x_{\bar \sigma}, x_{\bar \rho})$ such that
\[
x_{\sigma}x_{\bar\sigma} -x_{\bar \sigma}x_{\sigma}=0, 
\quad \mbox{$x_{\sigma}$ is nilpotent}, \quad x_{\bar \rho}=0.
\]
From the definitions, we see that the natural projection from $\mbf L_{\nu, d}$ to $\Lambda_{\nu}$ is a trivial vector bundle. This implies that 
$\mbf L_{\nu, d}$ is equidimensional of dimension $\nu^2+ \nu d$ and its irreducible components 
are of the form $T_{\lambda}$, the preimage of variety $\Lambda_{\nu, \lambda}$.

Let $\mbf L_{\nu, d}^s$ be the open subvariety of $ \mbf L_{\nu, d}$ defined by the following stability  condition:
\[
\underline {x_{\rho}^{-1}(0)} =\{0\}.
\]
The group $\G$ acts  on $\mbf L_{\nu, d}^s$ and its quotient exists, which  is exactly the variety $\widetilde \Sigma$ defined in ~\cite{G96} (with the chosen curve an affine line).
In particular, we have the result that the subvariety $T_{\lambda}^s = T_{\lambda}\cap \mbf L^s_{\nu, d}$ is  an irreducible component of $\mbf L_{\nu, d}^s$ for any partition $\lambda$ of $\nu$ and all irreducible components of $\mbf L_{\nu, d}^s$ are of this form. 
 
 Now  the statements (a)-(c) can be proved in a   similar way as   that of Proposition 4.4.2 in ~\cite{Li11}.
Let  $\Pi_{\nu, \underline d} =\sqcup_{\underline \nu} \Pi_{\nu, \underline d; \underline \nu} $ be a stratification, where 
$\Pi_{\nu, \underline d; \underline \nu} $  is the locally closed subvariety of $\Pi_{\nu, \underline d}$ consists of all elements such that the flag
$\{ \underline {x_{\rho}^{-1}(D_i) }| i=1, \cdots, m+1 \}$ is of type $\underline \nu$ and the union runs over all $m$-compositions $\underline \nu$ of $\nu$.

The assignment $x \mapsto  \{ \underline {x_{\rho}^{-1}(D_i) }| i=1, \cdots, m \}$, for any $x\in \E_{\nu, d}$,  defines a fibration 
\[
p: \Pi_{\nu, \underline d; \underline \nu} \to \F_{\underline \nu}.
\]
We set
\[
\Pi_{\nu, \underline d; \underline V} = p^{-1} (\underline V), \quad \forall \underline V\in \F_{\underline \nu}.
\]
We define a map
\[
\Pi_{\nu, \underline d; \underline V} \to \mbf L_{V/V_2, D/D_2}\times \mbf L^s_{V_2/V_3, D_2/D_3}\times \cdots \times  \mbf L_{V_m, D_m}^s ,
\]
by sending an element in $\Pi_{\nu, \underline d; \underline V}$ to the collection of  the subquotients of $x$ to $V_i/V_{i+1}\oplus D_i/D_{i+1}$ for any $1\leq i\leq m$. 
One can show that this map is a vector bundle of fiber dimension $\sum_{i<j} \nu_i \nu_j + \nu d- \sum_{i=1}^m \nu_i d_i$. 
By combining the above analysis, we see that the statements (a)-(c) follow.

The statement (d) follows from the fact that $\IC(\E_{\underline \lambda})$ is the leading term of the complex $L_{\underline \lambda}$ and $T_{\underline \lambda}$ is the leading
term of the singular support of $L_{\underline \lambda}$.

\subsection{Example} 
In the case when $\underline d= (1, 0)$. 
The results in this Section, such as Theorems ~\ref{L-fine}, ~\ref{Y} and Proposition  ~\ref{singular} 
have been obtained by Achar-Henderson ~\cite{AH08}, and Finkelberg-Ginzburg-Travkin ~\cite{FGT09}. 
Moreover, it was shown by Achar-Henderson, and Travkin (\cite{T09}) independently that the number of  $\G$-orbits in $\E_{\underline 1}$ are finite and  parametrized by the pair of partitions $(\lambda, \mu)$ such that $|\lambda|+|\mu| =\nu$. As a consequence, the intersection complex $\IC(\E_{\underline \lambda})$ is the intersection complex of the 
$\G$ orbit indexed  by the 2-partition $\underline \lambda$. 

Let $\mbf P_d$ be the stabilizer of the fixed flag $\underline D$ in  (\ref{fixedD}). Then the group $\G\times \mbf P_d$ acts on $\E_{\underline 1}$.
For general $\underline d$, the number of $\G\times \mbf P_d$-orbits in $\E_{\underline 1}$ is infinite.  
For example, the following  elements in $\E_{\underline1}$  belong to pairwise distinct orbits. 
\[
(x_{\sigma}, x_{\rho})= \left (
\begin{pmatrix}
0 & 1 & 0\\
0 & 0 & 1\\
0 & 0 & 0 
\end{pmatrix}, 
\begin{pmatrix}
0 & 1 & a \\
1 & 0 & 0
\end{pmatrix} \right ), 
\quad \forall a\in \mbb C.
\]
Because there are infinitely many $\G\times \mbf  P_d$-orbits,  
we are not sure if the intersection complex $\IC(\E_{\underline \lambda})$ is the intersection complex of a certain $\G\times \mbf P_d$-orbit.

\subsection{Tensor product of irreducible representations of $\mathfrak{sl}_N$}

Fix an integer $N$ and a composition $\underline \nu =(\nu_1, \cdots, \nu_m)$. In this section, we only consider 
$m$-composition $\underset{\to}{\nu}$ of the form $  (\underline {\nu_1},\cdots,  \underline {\nu_m}) $ and each composition  having $N$ parts. 
  
Let 
\[
\tF =\sqcup \tFv,
\]
where the union runs over all $m$-compositions under the above assumption. 
Let 
\[
\pi: \tF \to \E_{\nu, d}
\]
be the projection and $\Z_{\underline \nu} =\tF\times_{\E_{\nu, d}} \tF$ the associated fiber product.  
We also denote by $\tF_x$ the fiber of $x\in \E_{\nu, d}$ under $\pi$. 

Let $(\mbb C^N)^{\otimes \nu}$ be the tensor product of $\nu$ copies of the vector space $\mbb C^N$ of dimension $N$. Then the symmetric group $S_{\nu}$ acts 
on $(\mbb C^N)^{\otimes \nu}$ by permuting the components. 
Let  $S(N, \nu) =\End_{S_\nu} ( (\mbb C^N)^{\otimes \nu})$ be the Schur algebra. It is well known that $S(N, \nu)$ is a quotient of the universal enveloping algebra 
$U(\mathfrak {sl}_N)$ of  the  Lie algebra $\mathfrak{sl}_N$.

\begin{prop}
We have 
\[
H_{top} (\Z_{\underline \nu}) \simeq S(N, \nu_1) \otimes S(N, \nu_2) \otimes \cdots \otimes S(N, \nu_m),
\]
as $\mbb C$-algebras. Moreover, the irreducible components in $\Z_{\underline \nu}$ of largest dimension form a basis for $H_{top}(\Z_{\underline \nu})$.
\end{prop}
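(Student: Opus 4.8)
The plan is to follow the template already established in this paper for the symmetric-group case (Theorem~\ref{Y}), replacing $\F_{\underline 1}$ by the disjoint union $\tF$ of flag varieties indexed by $m$-compositions $\underset{\to}{\nu}$ whose constituent compositions each have $N$ parts, and reading off the top Borel--Moore homology of the Steinberg-type variety $\Z_{\underline \nu}$ via the endomorphism algebra of the corresponding perverse sheaf. First I would record the block decomposition $\Z_{\underline \nu} = \sqcup \Z_{\underline \nu}^{(\underset{\to}{\nu}, \underset{\to}{\nu}')} = \sqcup \tFv \times_{\E_{\nu,d}} \tF_{\underset{\to}{\nu}'}$ over pairs of such $m$-compositions; each block maps to $\F_{\underset{\to}{\nu}} \times \F_{\underset{\to}{\nu}'}$, on which $\G$ has finitely many orbits, and the dimension-counting argument of the second proof of Proposition~\ref{pi-small} and Corollary~\ref{relevant} applies block by block. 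The upshot is that $\dim \Z_{\underline \nu} = \dim \tF$ and that the irreducible components of maximal dimension are the closures $\overline{Y_M}$ where $M$ ranges over the relevant orbit matrices; the relevant ones are exactly the diagonal block matrices with blocks of sizes $\nu_1, \dots, \nu_m$, and within each block $M_k$ may be an arbitrary $0$--$1$ matrix compatible with the row/column sums prescribed by the two $N$-part compositions of $\nu_k$. The count of such matrices is precisely $\dim S(N,\nu_1)\cdots\dim S(N,\nu_m)$, since $S(N,\nu)$ has a basis indexed by $N\times N$ matrices over $\mbb N$ with entries summing to $\nu$, equivalently (after passing to complete flags refining the compositions and using the reduction $\xi_{\underline 1}$ factors through $\xi_{\underset{\to}{\nu}}$) by such orbit data.

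The algebra structure is obtained exactly as in the proof leading to Theorem~\ref{Y}: by the convolution formula \cite[2.7, 8.9.8]{CG97} one has $H_{top}(\Z_{\underline \nu}) \simeq \End(\pi_! \mbb C_{\tF}[\dim\tF])$ inside the category of perverse sheaves on $\E_{\nu,d}$, where the endomorphism ring is computed block-diagonally according to the decomposition of $\pi_! \mbb C_{\tF}[\dim\tF]$ into the various $L_{\underset{\to}{\nu}}$ (compatibility over $\E_{\nu,d}$ rather than $\E_{\nu,d,Q}$ being handled as in the singular-support section, or simply by working over $\E_{\nu,d,Q}$ and noting $x_{\bar\sigma}, x_{\bar\rho}$ contribute only a smooth factor). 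By Theorem~\ref{L-fine} the multiplicities are dimensions of simple $S_{\nu_1}\times\cdots\times S_{\nu_m}$-modules, so the total algebra decomposes as a product, over partitions $(\lambda_1,\dots,\lambda_m)$ with $|\lambda_i|\le \nu_i$ and at most $N$ parts each, of matrix algebras $\End(V_{\underline\lambda}\otimes M_{\underline\lambda})$ of the appropriate sizes; matching this against the double-centralizer decomposition of $S(N,\nu_1)\otimes\cdots\otimes S(N,\nu_m)$ over the same indexing set gives the claimed algebra isomorphism. This is the tensor analogue of Ginzburg's Lagrangian construction of the Schur algebra in \cite[Ch.~7]{CG97}, and the $m$-fold factorization is forced because the relevant matrices $M$ are block-diagonal.

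Finally, the basis statement: by Corollary~\ref{irreducible} applied blockwise, the irreducible components of $\Z_{\underline \nu}$ of top dimension are the $\overline{Y_M}$ with $M$ relevant, hence block-diagonal; their fundamental classes span $H_{top}(\Z_{\underline \nu})$ by \cite[2.6]{CG97}, and a dimension count shows this spanning set has exactly $\dim S(N,\nu_1)\cdots \dim S(N,\nu_m) = \dim H_{top}(\Z_{\underline \nu})$ elements, so it is a basis. The main obstacle I expect is not the dimension estimates (which are routine once the blockwise reduction is set up) but rather pinning down the algebra isomorphism with the correct normalization: one must check that the convolution product of the classes $[\overline{Y_M}]$ matches the multiplication of the standard matrix basis of the Schur algebra, which in \cite{CG97} requires a careful analysis of generic fibers and transversality of the relevant strata. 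Here one reduces to the $d=0$, single-block case (genuine type-$A$ Schur algebra geometry) by the vector-bundle structure of $\tau_1, \tau_2$ and the fact that passing to complete flags via $\xi_{\underline 1}$ factoring through $\xi_{\underset{\to}{\nu}}$ does not change the top homology, and then invokes \cite[\S7.6]{CG97} directly; making this reduction precise, including identifying $\mbf L_{\nu,d}^s$-type stability with the open locus where fibers are finite, is where the real work lies.
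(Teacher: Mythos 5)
Your proposal follows essentially the same route the paper intends: the paper's own justification is a one-line remark that the argument is the same as for Theorem~\ref{Y} combined with Ginzburg's geometric construction of the Schur algebra in \cite[Section 10]{C98}, and your blockwise decomposition of $\Z_{\underline \nu}$ over pairs of $m$-compositions, the dimension count identifying the relevant orbits with block-diagonal matrices, and the matching against the semisimple decomposition of $S(N,\nu_1)\otimes\cdots\otimes S(N,\nu_m)$ is precisely that argument written out. One slip worth correcting: the diagonal blocks $M_k$ indexing the top-dimensional components are arbitrary $N\times N$ matrices over $\mbb N$ with the prescribed row and column sums (not $0$--$1$ matrices), as you yourself implicitly acknowledge in the very next sentence when you count them against $\dim S(N,\nu_k)$.
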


This is analogous to Theorem ~\ref{Y}. Its proof is similar to that of Theorem ~\ref{Y} by taking into consideration of ~\cite[Section 10]{C98}.

\subsection{Heisenberg action}

Let $\K_{\nu, \underline d}$ be the vector space spanned by the elements in $\mathcal S_{\nu, \underline d}$ in Corollary ~\ref{dimension-S}. 
We set
\[
\K_{\underline d} =\oplus_{\nu\in \mbb N} \K_{\nu, \underline d}. 
\]
Then we have, as $\mbb C$-vector spaces, 
\[
\K_{\underline d} \simeq \mbb F^{\otimes m},
\]
defined by  $L_{\underset{\to}{\nu}} \mapsto x_{\underline{\nu_1}}\otimes \cdots \otimes x_{\underline{\nu_m}}$ for various $\underset{\to}{\nu}$,
where $x_{\underline \nu} = x_{\nu_1} \cdots x_{\nu_n}$ and  $\mbb F^{\otimes m} $ is the tensor product of $m$ copies of Fock space $\mbb F$ (see (\ref{fock})). 
Under such an isomorphism, we have 
\[
\K_{\nu, \underline d} =\bigoplus_{\nu_1+\cdots + \nu_m =\nu}  \mbb F_{\nu_1}\otimes \cdots \otimes  \mbb F_{\nu_m},
\]
where $\mbb F_{\nu}$ is the homogeneous component  of $\mbb F$  of degree $\nu$. Recall that $\mbb F^{\otimes m}$ admits a Heisenberg algebra action. 
We shall give a geometric realization of this action. 
The assignment  $L_{\underline{\nu_1}}\otimes \cdots \otimes L_{\underline{\nu_m}}  \mapsto L_{\underset{\to}{\nu}}$ defines an isomorphism of vector spaces over $\mbb C$:
\[
\phi: \K_{ d_1} \otimes \cdots \otimes \K_{d_m} \to \K_{\underline d}.
\]
On $\K_{ d_1} \otimes \cdots \otimes \K_{d_m} $, there is a geometric Heisenberg action isomorphic to the one on $\mbb F^{\otimes }$. 
We then transport this action to $\K_{\underline d}$ via the isomorphism $\phi$. Geometrically, $\phi$ is nothing but the Hall multiplication (see ~\cite{Li11}) and the action on
$\K_{ d_1} \otimes \cdots \otimes \K_{d_m} $ are defined geometrically by using induction and restriction functors by the argument in Section ~\ref{H1}. 
We see that the resulting action on $\K_{\underline d}$ is geometric, though it is not so natural. 

We finally remark that the obvious induction and restriction functors on $\K_{\underline d}$, for general $\underline d$ of more than two components,  do not produce the Heisenberg action just defined.

\subsection{Acknowledgement}

The main results in this paper are announced in the algebra seminar at Virginia Tech, February 2012 and the Southeastern Lie Theory workshop   on categorification of quantum groups and representation theory at NCSU, April 2012. We thank the organizers for the invitations. We also thank Mark Shimozono for pointing out the interesting problem of computing the intersection cohomology complexes defined in this paper. 
Partial support from NSF grant DMS 1160351 is  acknowledged.

\end{document}